\def\ogg~{{\rm \og}}   
\def\emptyset{\varnothing}
\def\NN{{\mathbb N}}    
\def\ZZ{{\mathbb Z}}     
\def\QQ{{\mathbb Q}}    
\def\ra{{\rightarrow}}
\def\cA{{\mathcal A}}  \def\cG{{\mathcal G}}  \def\cS{{\mathcal S}} \def\cB{{\mathcal B}}    \def\cT{{\mathcal T}} \def\cC{{\mathcal C}}  \def\cI{{\mathcal I}} \def\cO{{\mathcal O}}  \def\cD{{\mathcal D}}   \def\cP{{\mathcal P}}    \def\cK{{\mathcal K}}   \def\cF{{\mathcal F}}    \def\cX{{\mathcal X}} \def\cY{{\mathcal Y}}  \def\cZ{{\mathcal Z}}
\newcommand{\Inv}{\operatorname{Inv}}
\newcommand{\Ker}{\operatorname{Ker}}
\newcommand{\Id}{\operatorname{Id}}
\def\Im{\operatorname{Im}}                            
\newcommand{\textunderbrace}[2]{%
  \ensuremath{\underbrace{\text{#1}}_{\text{#2}}}%
}
\def\BState{\State\hskip-\ALG@thistlm}
\def\Hom{{\rm Hom}}
\def\Inf{{\rm Inf}}
\def\Dof{{\rm Def}}
\def\Res{{\rm Res}}
\def\Ind{{\rm Ind}}
\def\Iso{{\rm Iso}}
\def\Mod{{\rm Mod}}
\def\mod{{\rm mod}}
\def\Coker{{\rm Coker}}
\theoremstyle{plain}
	\newtheorem{Theo}{Theorem}[section] 
\newtheorem{Prop}[Theo]{Proposition}        
\newtheorem{Lemm}[Theo]{Lemma}            
\newtheorem{Coro}[Theo]{Corollary}
\theoremstyle{definition}
        \newtheorem{Defi}[Theo]{Definition}
	\newtheorem{Nota}[Theo]{Notation}
\theoremstyle{remark}
	\newtheorem{Rema}[Theo]{Remark}
\begin{document}
\author{Ibrahima Tounkara} 
\date{ }
\title{Simplicial Burnside ring}
\maketitle
\begin{center}
\begin{minipage}{13.5cm}{
\setlength{\baselineskip}{.1ex}

\centerline{\small\bf Abstract}

{\small This paper develops links between the Burnside ring of a finite group $G$ and the slice Burnside ring}. The goal is to gain a better understanding of ghost maps,  idempotents, prime spectrum of these Burnside rings and connections between them.}

\end{minipage}
\vspace{2ex}\par

\begin{minipage}{13.5cm}{
\setlength{\baselineskip}{.1ex}
}
\end{minipage}
\end{center}
 ${}$\\
 
\textbf{MSC(2010)}: 19A22, 18G30, 06A11, 20J15\\
\textbf{Keywords}: Burnside ring, simplicial sets, Poset, biset functor.
\section{Introduction}

Starting from questions in representation theory and homotopy theory, the investigation of  biset functors received considerable attention over the last decades.\\
 We refer to  \cite{S.Bouc}, which covered this subject in detail. We recall some key definitions relevant for the current paper, the presentation follows  recent  articles very closely.
 Let $G$ be a finite group. The {\em Grothendieck ring} constructed from the category of $G$-sets is denoted by $B(G)$ and is called the {\em Burnside ring} of $G$. If $X$ is a finite $G$-set, let $[X]$ be its image in $B(G)$. Additively, $B(G)$ is  the free abelian group on isomorphism classes of transitive $G$-sets. Equivalently, an additive $\ZZ$-basis is given by the $[G/H]$ where $H$ runs through a set  $[C(G)]$ of representatives of conjugacy classes of subgroups of $G$. The multiplication comes from the decomposition of $G/H \times G/K$ into orbits. The ring $B(G)$ is commutative with unit $[G/G]$.  If $H$ is a subgroup  of $G$, then there is a unique linear form $\phi_H: B(G )\ra \ZZ$ such that $\phi_H([X])= \vert X^H \vert $ for any $G$-set  $X$. It is clear moreover that $\phi_H$ is a  ring homomorphism, and  {\em Burnside's theorem} ( \cite{Burnside} Chap. XII Theorem I) is equivalent to the {\em ghost map}
 $\Phi = \prod_{H \in [C(G)]} \phi^G_H : B(G) \ra \prod_{H \in [C(G)]} \ZZ$
 being injective.
 The cokernel of the ghost map is finite, and has been explicitly described by Dress \cite{Dress}. In particular,  the ghost map $\QQ \Phi:  \QQ B(G) \ra \prod_{H \in [C(G)]} \QQ$ is an algebra isomorphism, where $\QQ B(G)= \QQ \otimes_{\ZZ} B(G)$. This shows that $\QQ B(G)$ is a split semisimple commutative $\QQ$-algebra.  Explicit formulas for its primitive idempotents have been given by Gluck \cite{Gluck} and independently by Yoshida \cite{Yoshida}.
 Andreas Dress proved in \cite{Dress} that  if $p$ is  $0$ or a prime and $I_{H,p}= \{ X \in B(G)~\vert ~ \phi_H( X)  \in p\ZZ\}$, then any prime ideal in $B(G)$ is of the form $I_{H,p}$ for some $H,p$. Moreover, a finite group is solvable if and only if the spectrum of its Burnside ring is connected (in the sense of Zariski's topology), i.e.  if and only if $0$ and $1$ are the only idempotents in $B(G)$. 
If $X$ is a finite $G$-set, the $\QQ$-vector space $\QQ X$ with basis $X$ has a natural $\QQ G$-module structure, induced by the action of $G$ on $X$. The construction $X \mapsto \QQ X$ maps disjoint unions of $G$-sets to direct sums of $\QQ G$-modules, and so it induces a map $Ch: B(G) \ra R_\QQ(G)$. There are important connections between  the Burnside ring and the permutation representations.
This latter map, leads to an associated map $Spec:  Spec (R_\QQ(G)) \ra Spec B(G)$ which is always injective (See \cite{Dress1}).\par
The {\em slice Burnside ring} $\Xi(G)$ introduced by Serge Bouc,  is built as the Grothendieck ring of the category of {\em morphisms} of finite $G$-sets, instead of the category of finite $G$-sets used to build the usual Burnside ring. It  shares  almost  all   properties of the Burnside ring. In particular, as already shown (see \cite{S.Bouc1} for a more  complete description), the slice Burnside ring  is a commutative ring, which is free of finite rank as a $\ZZ$-module. 
The investigation of the {\em slices}, that is the pairs of groups $(T,S)$ such that $S$ is a subgroup of $T$, is a central subject in the study of the slice Burnside ring. One reason for considering slices is that if $f: X \ra Y$ is a morphism of finite $G$-sets  then in $\Xi(G)$
$$[ \xymatrix{X \ar[r]^{f} &  Y} ]= \sum_{x \in [G\backslash X]} [ \xymatrix{G/G_x \ar[r]^{p} &  G/G_{f(x)}} ],$$
where square brackets to denote here the image of the isomorphism class of $f$ in $\Xi(G)$ and $G_x$ denotes the stabilizer of $x$. 
Thus, the group $\Xi(G)$ is generated by the elements $[ \xymatrix{G/S_0 \ar[r]^{p} &  G/S_1} ]$ where $(S_1,S_0) $ runs through a set $[\Pi(G)]$ of representatives of conjugacy classes of slices of $G$.   One  can show that this generating set is actually a basis of the slice Burnside group.
There is an analogue of Burnside's theorem. After  tensoring with $\QQ$, the slice Burnside ring becomes a split semisimple $\QQ$-algebra, and an explicit formula for his primitive idempotents  can be stated. The prime spectrum of this ring has been described, and Dress's characterization of solvable groups in terms of the connectedness of the spectrum of the Burnside ring can be generalized as well.\par
In this paper, we introduce a ring $B_n(G)$, for $n \in \NN$ such that $B_0(G)=B(G)$ and $B_1(G)=\Xi(G)$. We extend to $B_n(G)$ most of the properties recalled above in the case $n=0$ or $n=1$.\par
Recall that any poset $\Pi$ can be treated as a category in which the objects are the elements of $\Pi$ and in which there is exactly one morphism $x \ra y$ if $x \le y$ and there are no other morphisms. The nerve of $\Pi$ is then the same as the ordered  simplicial complex associated ( that is the vertices of $\Pi_n$  are the objects of $\Pi$ and the   $n$-simplices are the chains of objects of $\Pi$ of length $n$, with face maps given by $d_i(S_0, \dots, S_n)= (S_0, \dots, \hat S_i , \dots, S_n)$, where as usual, the term $ \hat {} $ denotes a term that is being omitted and the degeneracy maps given by   $s_j(S_0, \dots, S_n)= (S_0, \dots, S_j, S_j , \dots, S_n)$ ). In particular, if we consider the collection of all subgroups of $G$ ordered by inclusion, we get the nerve category  $\Pi_\bullet(G)$ where elements in $\Pi_n(G)$, whose are just chains 
$$\bar \cS:S_0 \subseteq S_{1} \subseteq  \dots \subseteq  S_n$$
 will be called  {\em $n$-slice}, this is the simplicial complex considered in \cite{Quillen} for $p$-groups. Our  ring $B_n(G)$ has basis the set of conjugacy classes of $n$-slices, with multiplication given by
$$(S_0, \dots, S_n)\cdot (T_0, \dots, T_n)=  \sum\limits_{\substack{g \in [S_0\backslash G/ T_0]}}  (S_0 \cap {}^{g}T_0, \dots,   S_n \cap {}^{g}T_n ).$$ 
 This paper is organized as follows:\\
In Sect. 2, we examine the Grothendieck group of the nerve of the skeleton   of the category  of finite $G$-sets (we abuse for using  the term nerve of  ${}_GSet$). It turns out that the obtained group $B_n(G)$, called the $n$-simplicial group,  is very similar to the classical Burnside group. It is worthwhile to discover wether these well-known properties of the Burnside ring  characterize $B_n(G)$, since then will known regard this ring as a "geometric realization" of some simplicial $G$-set.     
Hence in this part we  deepen the links between the classical Burnside rings and the slice Burnside rings. In Sect. 3, we establish that the $n$-simplicial  Burnside ring embeds in a product of copies of the integers, via a {\em ghost map}, and this map has a finite cokernel. It turns out that $B_n(G)$ is a commutative  semisimple  algebra afer tensoring with $\QQ$, isomorphic to a direct sum indexed by  $[\Pi_n(G)]$ of copies of $\QQ$. In sect. 4. we give an explicit formula for the  primitive idempotents of $\QQ B_n(G)$. Sect.5 is devoted to the study of the prime spectrum of $B_n(G)$ by extending  Dress's characterization of solvable groups for $B(G)$. The last section examines the Green biset functor structure of $B_n$.




\section{Simplicial Burnside group}
 For $G$  a finite group, let $\cC_\bullet^G$ be the {\em nerve  of the category} ${}_GSet$ of finite $G$-sets (see \cite{May} for more details, \cite{Maclane} P.177), that is,  the simplicial set whose $n$-simplices are diagrams 
 $$\cC_n^G= \{\cX_n^f:= \xymatrix{X_0 \ar[r]^{f_1} &  X_{1}\ar[r]^-{f_{2}} & \dots X_{n-1} \ar[r]^-{f_{n}} &  X_{n}} \}$$
 where the $X_i$ are $G-sets$ and the $f_i$ are morphisms of $G$-sets, and the {\em degeneracy} $s_i$ and {\em face} $d_i$ \footnote{Normally, we might be careful to label the face maps from $\cC_n^G$ to $\cC_{n-1}^G$ as $d^n_0, \dots, d^n_n$, similarly for the degeneracy maps, but this is rarely done in practice.} maps are defined by including an identity $\xymatrix{X_i \ar[r]^{id} & X_i}$, and leaving out $X_0$ if $i=0$, contracting 
$\xymatrix{X_{i-1} \ar[r]^{f_i} & X_{i}\ar[r]^{f_{i+1}} & X_{i+1}}$ to  $\xymatrix{X_{i-1} \ar[r]^{f_{i+1} \circ f_i} & X_{i+1}}$ if $0<i<n$, leaving $X_n$ if $i=n$, respectively. 
Then the following identities may be verified directly
\begin{equation}\label{1}
d_j \circ d_i  = d_{i}  \circ d_{j+1} ~~~ \mbox{for} ~~  i \le j \\
\end{equation}
\begin{equation}\label{2}
 s_i \circ s_j  = s_{j+1} \circ  s_{i}  ~~~ \mbox{for} ~~  i \le j\\
\end{equation}
\begin{equation}\label{3}
 d_i \circ  s_{j} = \left\{ \begin{array}{rcl}
 s_{j-1}  \circ  d_{i}& \mbox{if}&  i < j,\\
 id_{[n]} & \mbox{if}& i= j, j+1,\\
 s_{j} \circ  d_{i-1}  & \mbox{otherwise }&.
\end{array}\right.
\end{equation}

\begin{Nota} \label{operators} 
Let $\cC_n^G$ be the category defined as follows:
\begin{itemize}
\item The {\em objects} of $\cC_n^G$ are  sequences $\cX^f_n: \xymatrix{X_0 \ar[r]^{f_1} &  X_{1}\ar[r]^-{f_{2}} & \dots X_{n-1} \ar[r]^-{f_{n}} &  X_{n}}$ of morphisms of $G$-sets called $(n,G)$-simplices. 
\item If $\cX^f_n$ and $\cY^g_n$ are objects of  $\cC_n^G$, a {\em morphism} from $\cX^f_n$ to $\cY^g_n$ is  a family $(\mu_i: X_i \ra Y_i)_{0 \le i \le n}$ of morphisms of $G$-sets such that $\mu_{i} \circ f_i = g_i \circ \mu_{i-1}$, for $i=1, \dots, n$.
\item Morphisms of $(n,G)$-simplices   compose in the obvious way (coordinate-wise). 
\end{itemize}
\end{Nota}





\begin{Prop}
 For a non-negative integer $n$, the category  $\cC_n^G$  has finite products $\times$ and  coproducts  $\sqcup$  induced by those of the category of finite  $G$-sets, respectively. It has also an  initial object $\emptyset=  (\xymatrix{\emptyset \ar[r]^{} &  \emptyset \ar[r]^-{} & \dots \emptyset \ar[r]^-{} &  \emptyset })$.
\end{Prop}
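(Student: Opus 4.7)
The plan is to construct products, coproducts, and the initial object in $\cC_n^G$ componentwise, inheriting the universal properties from the corresponding constructions in the category ${}_GSet$ of finite $G$-sets.

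First I would define the product of two $(n,G)$-simplices $\cX^f_n$ and $\cY^g_n$ to be
\[\cX^f_n \times \cY^g_n := \bigl(X_0 \times Y_0 \xrightarrow{f_1 \times g_1} X_1 \times Y_1 \xrightarrow{f_2 \times g_2} \cdots \xrightarrow{f_n \times g_n} X_n \times Y_n\bigr),\]
equipped with the componentwise projections $\pi^X = (\pi^X_i)_{0 \le i \le n}$ and $\pi^Y = (\pi^Y_i)_{0 \le i \le n}$. These are morphisms in $\cC_n^G$ because the identity $\pi^X_i \circ (f_i \times g_i) = f_i \circ \pi^X_{i-1}$ is part of the very definition of $f_i \times g_i$ in ${}_GSet$ (and symmetrically for $\pi^Y$). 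To verify the universal property, given $\alpha = (\alpha_i): \cZ^h_n \to \cX^f_n$ and $\beta = (\beta_i): \cZ^h_n \to \cY^g_n$, the universal property of products in ${}_GSet$ yields, at each level $i$, a unique $G$-map $\gamma_i := (\alpha_i, \beta_i): Z_i \to X_i \times Y_i$. That the family $(\gamma_i)$ is itself a morphism in $\cC_n^G$, i.e.\ $\gamma_i \circ h_i = (f_i \times g_i) \circ \gamma_{i-1}$, follows by postcomposing with $\pi^X_i$ and $\pi^Y_i$, applying the compatibility identities satisfied by $\alpha$ and $\beta$, and invoking uniqueness of the arrow into a product.

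The coproduct construction is formally dual: I would set $\cX^f_n \sqcup \cY^g_n$ to be $(X_0 \sqcup Y_0 \xrightarrow{f_1 \sqcup g_1} \cdots \xrightarrow{f_n \sqcup g_n} X_n \sqcup Y_n)$, with the componentwise canonical injections; the universal property reduces termwise to that of coproducts in ${}_GSet$, using uniqueness of the arrow out of a coproduct to verify the compatibility with transition maps. For the initial object, the constant sequence $\emptyset \to \emptyset \to \cdots \to \emptyset$ is clearly an $(n,G)$-simplex, and for any $\cX^f_n$ there is a unique $G$-map $\emptyset \to X_i$ at each level; these maps automatically assemble into a morphism in $\cC_n^G$, since the required squares commute trivially on the empty source.

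The only substantive verification is ensuring that the componentwise universal morphisms assemble into a morphism of $(n,G)$-simplices; in each case this reduces to a single application of the uniqueness clause in the universal property of a product (resp.\ coproduct) in ${}_GSet$, so no genuine obstacle arises.
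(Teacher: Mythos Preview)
Your proof is correct and is exactly the componentwise verification the paper has in mind; the paper's own proof is simply the one-line ``This is straightforward.'' You have just spelled out the routine details that the paper omits.
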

\begin{proof}
This is straightforward.
\end{proof}

\begin{Defi}
Let $G$ be a finite group. 
  A {\em $n$-slice}  of $G$ is a $(n+1)$-tuple $(S_0, S_{1}, \dots, S_n)$ of subgroups of $G$, with $S_{i-1} \le S_i$,  $\forall i   \in \{1, \dots, n\}$. It is helpful to refer the tuple as $\bar \cS$, in which each composite $ \cS_i$ as representing the group $S_i$. The set of all $n$-slices of $G$ will be denoted by $\Pi_n(G)$.
\end{Defi}

\begin{Defi} For any nerve $\cC_\bullet$, define a pre-ordering $\preceq$ of $Ob(\cC_n)$ by $A \preceq B$ if $\Hom_{\cC_n} (A, B) \neq \emptyset$ and an equivalence $\cong$ on $Ob(\cC_n)$ by $A \cong B$ if and only if $A \preceq B$ and $B \preceq A$. So, on $\Pi_n(G)$, we have  the following relation
$$\bar \cT:= (\cT_0, \dots, \cT_n) \preceq \bar \cS:= (\cS_0, \dots, \cS_n) \Longleftrightarrow \cT_i \le \cS_i,~~~~~~ \forall i=0,\dots, n$$
\end{Defi}


  Recall that in ${}_GSet$, $X$ is indecomposable if and only if $X$ is simple and any simple $G$-set is isomorphic to $G/H$ for some subgroup $H$ of $G$. 
 \begin{Nota}${}$
 \begin{itemize}
\item Any  $n$-slice $\bar \cS$ of $G$ gives rise to an $(n,G)$-simplex
$$(G/\bar \cS)_n := (\xymatrix{G/S_0 \ar[r]^{p_1} &  G/S_1\ar[r]^{p_2} & \dots G/S_{n-1} \ar[r]^{p_{n}} &  G/S_n}),$$ 
where $p_i$ are the projection morphisms.\\ 
The $(n,G)$-simplices   $(G/\bar \cS)_n$ are {\em indecomposable} in the sense that  $(G/\bar \cS)_n=\cY_n^g \sqcup ~\cZ_n^h \Longrightarrow \cY_n^g=\emptyset$ or $\cZ_n^h=\emptyset$.
\item  For  a $(n,G)$-simplex $\cX^f_n$, we set
$$\phi_{ \bar \cS} \big(\cX^f_n \big):= \big\vert \Hom_{\cC^G_n} \big((G/ \bar \cS)_n,  \cX^f_n \big) \big\vert,$$ the number of elements in the set $\Hom_{\cC^G_n} \big((G/ \bar \cS)_n,  \cX^f_n \big)$.
\end{itemize}
\end{Nota}

\begin{Prop}  Let  $\cX^f_n$, $\cY_n^g$ be $(n,G)$-simplices 
$$\phi_{\bar \cS} (\cX^f_n)=\Bigg\vert f_1^{-1}\Bigg(  f_2^{-1}\Bigg( ... f_i^{-1}\bigg(...f_{n-1}^{-1}\big( f_{n}^{-1}(X_n^{S_n}) ^{S_{n-1}}\big)^{S_{n-2}} ...\bigg)^{S_i} ...  \Bigg)^{S_1} \Bigg)^{S_0}\Bigg\vert, $$
in particular, for any $n$-slice $\bar \cT$, one has
$$\phi_{\bar \cS}((G/ \bar \cT)_n)= \vert \{ g \in G/\cT_0 ~\vert~ {\bar \cS}^g \preceq \bar \cT\} \vert.$$
\end{Prop}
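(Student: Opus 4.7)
The plan is to count the hom-set directly by observing that a morphism $(\mu_i \colon G/S_i \to X_i)_{0 \le i \le n}$ satisfying $\mu_i \circ p_i = f_i \circ \mu_{i-1}$ is completely determined by the single element $x_0 := \mu_0(S_0) \in X_0$. Indeed, $\mu_0$ is a $G$-map out of the transitive set $G/S_0$, so it is determined by $x_0$, which must lie in $X_0^{S_0}$. Evaluating the compatibility relation at the coset $S_{i-1}$ gives $\mu_i(S_i) = f_i(\mu_{i-1}(S_{i-1}))$, so by induction $\mu_i(S_i) = f_i \circ \cdots \circ f_1(x_0)$. For each $\mu_i$ to extend to a well-defined $G$-map on $G/S_i$, this value must lie in $X_i^{S_i}$; conversely, any $x_0 \in X_0$ meeting all these conditions yields a unique family of $\mu_i$.

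Next I would repackage the system of constraints $f_i \circ \cdots \circ f_1(x_0) \in X_i^{S_i}$, for $i = 0, 1, \ldots, n$, into the nested expression appearing in the statement. Setting $A_n := X_n^{S_n}$ and $A_{i-1} := (f_i^{-1}(A_i))^{S_{i-1}}$, a short induction shows that $x_0 \in A_0$ is equivalent to the conjunction of all the fixed-point conditions. The key observation is that because each $f_i$ is $G$-equivariant and $S_{i-1} \le S_i$, every $S_i$-stable subset of $X_i$ is automatically $S_{i-1}$-stable, and its preimage under $f_i$ is then an $S_{i-1}$-stable subset of $X_{i-1}$; hence the operation $(f_i^{-1}(-))^{S_{i-1}}$ meshes cleanly from one level to the next.

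For the specialization, I would apply the general formula to $\cX_n^f = (G/\bar\cT)_n$, where $X_i = G/T_i$ and $f_i$ is the projection $p_i$. Writing $x_0 = gT_0$, the classical identity $(G/T_i)^{S_i} = \{hT_i : S_i \le {}^h T_i\}$ shows that $p_i \circ \cdots \circ p_1(x_0) = gT_i$ lies in $(G/T_i)^{S_i}$ if and only if $S_i^g \le T_i$. Taking the conjunction over $i = 0, \ldots, n$ is precisely $\bar\cS^g \preceq \bar\cT$, and the number of admissible cosets $gT_0 \in G/T_0$ is then the stated cardinality.

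The one step requiring genuine care is the equivalence of the list of conditions $\{x_i \in X_i^{S_i}\}_{i}$ with the nested formula $x_0 \in A_0$: one must verify, at each stage, that $A_i$ is $S_{i-1}$-stable, relying on the chain $S_{i-1} \le S_i$ built into the definition of an $n$-slice together with $G$-equivariance of $f_i$. This bookkeeping is the real content of the proof; the remainder is a routine unfolding of the hom description above.
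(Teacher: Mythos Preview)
Your proposal is correct and follows essentially the same approach as the paper: both parametrize $\Hom_{\cC_n^G}((G/\bar\cS)_n,\cX_n^f)$ by the data $x_i=\mu_i(S_i)\in X_i^{S_i}$ subject to $f_i(x_{i-1})=x_i$, and then specialize to $(G/\bar\cT)_n$ via the usual description of $(G/T_i)^{S_i}$. The only difference is that you make explicit the passage from the tuple description to the nested preimage formula (your sets $A_i$), which the paper leaves to the reader; your remark about $S_{i-1}$-stability is true but not actually needed for that induction.
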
 

\begin{proof}${}$
 Observe that for any $i= 0, \dots, n$, there is a bijection between $\Hom(G/\cS_i, X_i)$ and the set $X_i^{\cS_i}:=\{x \in X_i ~\vert~ gx=x~~\text{for all} ~g \in \cS_i\}$. Indeed, each $f$  in $ \Hom(G/\cS_i, X_i)$ maps the element $\cS_i \in G/\cS_i$ onto an element $x_i \in X_i$ which is invariant  by $\cS_i$. Further, $f$ is completely  determined by $x_i$, since $f(g \cS_i)=gx_i$ for all $g \in G$. The correspondence $f\mapsto x_i$ gives the desired bijection. 
Therefore, the set $\Hom_{} \big((G/ \bar \cS)_n,  \cX^f_n \big)$  is in bijection with de set $$\{(x_0,x_1, \!\dots,\! x_n)\! \in X_0^{S_0}\! \times \!X_1^{S_1} \!\times\! \dots \!\times X_n^{S_n} \vert ~  f_1(x_0)\!=\!x_1;  \dots; f_{n}(x_{n-1})\!=\!x_n \},$$
where the last equalities follow from the commutativity of the diagram
\[
 \xymatrix{
 G/S_0 \ar[d]_{\mu_0} \ar[r]^{p_1}&G/S_{1} \ar[d]_{\mu_{1}} \ar[r]^{p_{2}}&G/S_{2}  \ar[d]_{\mu_{2}} \ar[r]& \dots \dots &G/S_{n-1}\ar[d]_{\mu_{n-1}} \ar[r]^{p_{n}}&G/S_n\ar[d]^{\mu_n}\\
 X_0\ar[r]^{f_{1}}&X_{1}\ar[r]^{f_{2}}&X_{n-2} \ar[r]&  \dots \dots& X_{n-1}\ar[r]^{f_{n}}&X_n
 }
\]
If we take $X_i=G/\cT_i$ for $i=0,\dots, n$, we note that the left coset $g\cT_i$ is $\cS_i$-invariant if and only if $\cS_i.g \cT_i=g\cT_i$, that is, $g^{-1}\cS_ig \le \cT_i$. This establishes that $$(G/ \cT_i)^{\cS_i}=\{g \cT_i~\vert~ g\in G, g^{-1}S_ig \le \cT_i \}$$ and therefore  $(G/\cT_i)^{S_i}= \emptyset $ unless $\cS_i \le_G \cT_i$ (  by which we mean  that   $S_i$ is conjugated to a subgroup of $\cT_i$ ). In particular,
\begin{eqnarray*}
\phi_{\bar \cS}((G/ \bar \cT)_n) &=& \vert \{(g\cT_0, \dots, g\cT_n) ~\vert~ g \in G, S_0^g\le \cT_0, \dots, S_n^g\le \cT_n ~\text{and}~g\cT_{i-1}= g\cT_{i}  \} \vert\\
    &=&\vert \{ g \in G/\cT_0 ~\vert~ {\bar \cS}^g \preceq \bar \cT\} \vert.
\end{eqnarray*}

\end{proof}

\begin{Coro} \label{conj}${}$  Let  $\cX^f_n$, $\cY_n^g$ be $(n,G)$-simplices 
\begin{enumerate}
\item If $\cX^f_n$ and  $\cY_n^g$ are isomorphic then $\phi_{\bar \cS}(\cX_n^f ) =\phi_{\bar \cS}(\cY_n^g )$. 
\item \label{congrue}
Let $p$ be a prime and let $\bar \cS$ be a $n$-slice of $G$. If $P$ is a $p$-subgroup of $N_G(\bar \cS)=\cap_{i=0}^n N_G(S_i)$ and $P\bar \cS$ denotes the $n$-slice $(PS_0, \dots, PS_n)$, then 
$$\phi_{\bar \cS}(\cX^f_n) \equiv \phi_{P\bar \cS}(\cX^f_n)~ (\mod.~p),$$
for any $(n,G)$-simplex $\cX^f_n$.

\item Two indecomposable $(n,G)$-simplices $ (G/ \bar \cS)_n$ and  $(G/ \bar \cT)_n$ are isomorphic if and only if the $n$-slices $ \bar \cS$ and $\bar \cT$ are conjugate (we set $\bar \cS=_G \bar \cT$).
\item If $\bar \cS$ and $\bar \cT$ are two $n$-slices, then $\phi_{\bar \cS}(\cX_n^f ) \le \phi_{\bar \cT}(\cX_n^f )$ for any $(n,G)$-simplex $\cX_n^f$ if and only if $\bar \cT \preceq_G \bar \cS$.\\ In particular, $\phi_{\bar \cS}(\cX_n^f ) = \phi_{\bar \cT}(\cX_n^f )$ for all $(n,G)$-simplices $\cX_n^f$ if and only if $\bar \cT=_G~ \bar \cS$.

\item For any two indecomposable $(n,G)$-simplices $(G/\bar \cS)_n$ and  $(G/\bar \cT)_n$, one has    $\phi_{\bar \cS}((G/ \bar \cS)_n)$ divides $\phi_{\bar \cT}((G/ \bar \cS)_n)$.

\end{enumerate}
   
 \end{Coro}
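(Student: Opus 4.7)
The plan is to derive all five assertions directly from the explicit formula for $\phi_{\bar \cS}$ established in the preceding Proposition, together with the tautology $\phi_{\bar \cS}(\cX_n^f)=|\Hom_{\cC_n^G}((G/\bar \cS)_n,\cX_n^f)|$. Under this dictionary, isomorphisms, componentwise surjections and group actions on source or target translate immediately into bijections, injections or equivariant actions on finite $\Hom$-sets, so each clause reduces to elementary counting.

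For (1), an isomorphism $\cX_n^f \cong \cY_n^g$ gives, by post-composition, a bijection between the two $\Hom$-sets. For (3), the ``if'' direction is a direct check: from $\bar \cT^g = \bar \cS$ one sees that the maps $hT_i \mapsto hg^{-1}S_i$ are well-defined $G$-equivariant bijections compatible with the projections $p_i$. The ``only if'' direction uses (1) to obtain $\phi_{\bar \cT}((G/\bar \cS)_n)\ge 1$ and symmetrically, and then applies the counting formula in the Proposition to extract $g,h\in G$ with $\bar \cT^g \preceq \bar \cS$ and $\bar \cS^h \preceq \bar \cT$; comparing orders forces $|T_i|=|S_i|$ and hence $T_i^g=S_i$ for a single $g$. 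Part (4) follows the same pattern: for ``if'' one may (after replacing $\bar \cT$ by a conjugate, which is harmless by (1)) assume $T_i\le S_i$ and use the componentwise surjection $(G/\bar \cT)_n \twoheadrightarrow (G/\bar \cS)_n$, whose pre-composition is injective on $\Hom$-sets; for ``only if'' take $\cX_n^f=(G/\bar \cS)_n$ and apply the counting formula to $\phi_{\bar \cT}((G/\bar \cS)_n)\ge \phi_{\bar \cS}((G/\bar \cS)_n)\ge 1$. The ``in particular'' clause combines both inequalities with the order argument from (3).

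The only assertion requiring an idea beyond bookkeeping is (2), which is the main obstacle. Since $P \le N_G(\bar \cS)=\bigcap_i N_G(S_i)$, each $p\in P$ yields an automorphism $\rho_p$ of $(G/\bar \cS)_n$ whose component on $G/S_i$ is $gS_i\mapsto gpS_i$; this is well-defined precisely because $p$ normalizes $S_i$, and one checks that the components commute with the projections $p_i$. Pre-composition gives a $P$-action on the finite set $\Hom((G/\bar \cS)_n,\cX_n^f)$, and a morphism $\mu$ is $P$-fixed iff each $\mu_i(S_i)$ is $P$-invariant, equivalently iff $\mu_i$ factors through $G/\langle S_i,P\rangle = G/PS_i$, i.e. iff the whole family factors through $(G/P\bar \cS)_n$. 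The standard $p$-group fixed-point congruence then yields $\phi_{\bar \cS}(\cX_n^f)\equiv |\mathrm{Fix}|=\phi_{P\bar \cS}(\cX_n^f)\pmod{p}$.

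For (5), I would first note that $\End_{\cC_n^G}((G/\bar \cS)_n)=\Aut_{\cC_n^G}((G/\bar \cS)_n)$: any $G$-endomorphism of $G/S_i$ is given by right multiplication by an element of $N_G(S_i)$ and is therefore a bijection, so every endomorphism of the whole simplex is an automorphism. Hence $\phi_{\bar \cS}((G/\bar \cS)_n)=|\Aut((G/\bar \cS)_n)|$. This automorphism group acts freely on $\Hom((G/\bar \cT)_n,(G/\bar \cS)_n)$ by post-composition (automorphisms are in particular monomorphisms, so $\alpha\circ\mu=\alpha\circ\nu$ implies $\mu=\nu$), so its order divides $\phi_{\bar \cT}((G/\bar \cS)_n)$. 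Apart from the careful verification in (2) that the $P$-action is well-defined in $\cC_n^G$ and that its fixed locus matches exactly the factorisation through $(G/P\bar \cS)_n$, every step is a short formal argument.
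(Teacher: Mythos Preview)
Your proof follows essentially the same strategy as the paper's, recast in categorical language (Hom-sets and automorphism groups in place of the explicit fixed-point sets $\Inv_{\bar\cS}$ that the paper uses). Parts (1)--(4) are correct and match the paper's arguments step for step; your treatment of (2) via the $P$-action on $(G/\bar\cS)_n$ by right translation and the $p$-group fixed-point congruence is exactly the paper's argument, stated more cleanly.

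There is one small slip in (5). Freeness of the post-composition action of $\Aut((G/\bar\cS)_n)$ on $\Hom((G/\bar\cT)_n,(G/\bar\cS)_n)$ is the statement that $\alpha\circ\mu=\mu$ forces $\alpha=\Id$; this holds because each component $\mu_i\colon G/T_i\to G/S_i$ is \emph{surjective} (any $G$-map between transitive $G$-sets is onto), so one can cancel $\mu$ on the right. Your parenthetical justification ``automorphisms are monomorphisms, so $\alpha\circ\mu=\alpha\circ\nu$ implies $\mu=\nu$'' proves a different (and irrelevant) fact, namely that post-composition by a fixed $\alpha$ is injective. Replace that sentence with the epimorphism argument and the proof is complete; this is precisely what the paper does in concrete terms, letting $N_G(\bar\cS)/S_0$ act freely on $\Inv_{\bar\cT}((G/\bar\cS)_n)\subseteq G/S_0$ by right translation.
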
  
 
 \begin{proof}${}$
\begin{enumerate}
\item It is clear, since any  isomorphism  of $G$-sets $\mu_i: X_i \ra Y_i$ induces a bijection $X_i^S\ra Y_i^S$ on the sets of fixed points by any subgroup $S$ of $G$.
\item For any $(n,G)$-simplex $\cX_n^f$, the set $$ \Inv_{\bar \cS} (\cX_n^f):=  f_1^{-1}\Bigg(  f_2^{-1}\Bigg( ... f_i^{-1}\bigg(...f_{n-1}^{-1}\big( f_{n}^{-1}(X_n^{S_n}) ^{S_{n-1}}\big)^{S_{n-2}} ...\bigg)^{S_i} ...  \Bigg)^{S_1} \Bigg)^{S_0}$$
is invariant by $N_G(\bar \cS )$, and so 
$$\vert  \Inv_{\bar \cS} (\cX_n^f) \vert  \equiv \Bigg\vert f_1^{-1}\Bigg(   ... f_i^{-1}\bigg(... f_{n}^{-1}(X_n^{S_n}) ^{S_{n-1}} ...\bigg)^{S_i} ...  \Bigg)^{PS_0}  \Bigg\vert~(\mod~p),$$
and moreover 
$$f_1^{-1}\Bigg(   ... f_i^{-1}\bigg(... f_{n}^{-1}(X_n^{S_n}) ^{S_{n-1}} ...\bigg)^{S_i} ...  \Bigg)^{PS_0} \equiv f_1^{-1}\Bigg(   ... f_i^{-1}\bigg(... f_{n}^{-1}(X_n^{PS_n}) ^{PS_{n-1}} ...\bigg)^{PS_i} ...  \Bigg)^{PS_0}.$$
 \item If $(G/\bar \cS)_n \cong (G/\bar \cT)_n$ then $\Hom(G/\bar \cS, G/\bar \cT) \neq \emptyset $ and so $\bar \cS \le_G  \bar \cT$. Therefore $ \bar \cS =_G \bar \cT$ by symmetry. Conversely  if $\bar \cS =_G \bar \cT$, for example $\bar \cS^{g} = \bar \cT$ with $g \in G$, then there exists an isomorphism of $(n,G)$-simplices $(\mu_i: G/\cT_i \ra G/\cS_i)_{0 \le i \le n}$, given by
$$\mu_i (g'T_i) =\mu_i (g'g^{-1}S_ig)=g'g^{-1}S_i, ~~\text{for}~ g' \in G.$$
So $(G/\bar \cS)_n \cong (G/\bar \cT)_n$ as $(n,G)$-simplices if and only $\bar \cS=_G\bar \cT$.

\item If $\phi_{\bar \cS}(\cX_n^f ) \le \phi_{\bar \cT}(\cX_n^f )$ for any $(n,G)$-simplex, then in particular $ \phi_{\bar \cT}((G/ \bar \cS)_n ) \neq 0$ since $ \phi_{\bar \cS}(G/ \bar \cS ) \neq 0$, and so $\bar \cT \preceq_G \bar \cS$. On the other hand, if $ \bar \cT \preceq {}^g  \bar \cS= \bar \cK $, then $\phi_{\bar \cS}(\cX_n^f )= \vert  \Inv_{\bar \cS} (\cX_n^f) \vert =     \vert  \Inv_{\bar \cK} (\cX_n^f)  \vert \le \vert  \Inv_{\bar \cT} (\cX_n^f)   \vert = \phi_{\bar \cT}(\cX_n^f )$.
\item Consider the action of $N_G(\bar \cS)$ on $G/S_0$ defined by  $$x.gS_0= gx^{-1}S_0$$
for $x\in N_G(\bar \cS)$ and $gS_0 \in G/S_0$. Then $S_0$ acts trivially on $G/S_0$ and so it becomes a left $N_G(\bar \cS)/S_0$. Moreover, $N_G(\bar \cS)/S_0$ acts freely on $G/S_0$. Note that for any $(n,G)$-slice  $\bar \cT$,  the set 
 $$\Inv_{\bar \cT} ((G/\bar \cS)_n):=\{gS_0 ~\vert ~ \bar \cT\cdot g \bar \cS= g\bar \cS \} $$  is an $N_G(\bar \cS)/S_0$-subset of $G/S_0$. So $N_G(\bar \cS)/S_0$ acts freely on  $\Inv_{\bar \cT} ((G/\bar \cS)_n)$, and so $\vert N_G(\bar \cS)/S_0 \vert $ divides $\vert  \Inv_{\bar \cT} ((G/\bar \cS)_n) \vert$.

\end{enumerate}
\end{proof}

\begin{Defi}
A $(n,G)$-simplex  $\cX^f_n$ is called {\em $i$-sliceable}  with $i$ in $\{0,\dots, n\}$ if $X_i= A_i \sqcup B_i$ as disjoint union of two non-empty $G$-sets.\\
\end{Defi}

\begin{Rema}
Note that for any {\em i-sliceable} $(n,G)$-simplex  $\cX^f_n$,  one has $(n,G)$-simplices  
\begin{itemize}
\item $\cA_n^f:\xymatrix{A_0 \ar[r]^{f_1} &  A_{1}\ar[r]^{f_{2}} &  \ar[r] \dots A_{i} \ar[r]^{f_{i+1}} & X_{i+1} \dots \ar[r]^{f_{n-1}}   &X_{n-1} \ar[r]^{f_{n}} &  X_{n}} $, 
\item $\cB_n^f: \xymatrix{B_0 \ar[r]^{f_1} &  B_{1}\ar[r]^{f_{1}} &  \ar[r] \dots B_i \ar[r]^{f_{i+1}} & X_{i+1} \dots \ar[r]^{f_{n-1}}  &X_{n-1} \ar[r]^{f_{n}} &  X_{n}}$,
\end{itemize}
defined inductively by 
$$A_{j-1}:= f_{j }^{-1}(A_j) ~~\text{and}~~ B_{j-1}:= f_{j }^{-1}(B_j),$$ for any $1 \le j\le i$.\\
We set $[\cX^f_n, \cA^f_n, \cB^f_n]_i$ to denote the corresponding triple. If $\Gamma$ denotes the class of such triple $(\cX^f_n, \cA^f_n, \cB^f_n)$, then $\Gamma$ is closed by isomorphism.
\end{Rema}

\begin{Prop}\label{defining}${}$\\
For any $(\cX_n^f, \cA_n^f, \cB_n^f) \in \Gamma$ and for a $n$-slice $\bar \cS$ fixed, we  have
\begin{equation}
   \phi_{\bar \cS} (\cX^f_n)= \phi_{\bar \cS} (\cA^f_n) + \phi_{\bar \cS} (\cB^f_n).
   \end{equation}
\end{Prop}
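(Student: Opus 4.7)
The plan is to unwind the iterated-fixed-point formula for $\phi_{\bar\cS}$ established in the previous proposition, and to verify that it splits cleanly under the decomposition $X_i = A_i\sqcup B_i$. Setting
$$V_n := X_n^{S_n}, \qquad V_{j-1} := f_j^{-1}(V_j)^{S_{j-1}} \text{ for } 1\le j\le n,$$
one has $\phi_{\bar\cS}(\cX_n^f)=|V_0|$, and I introduce analogous sequences $V_j^A, V_j^B$ attached to $\cA_n^f$ and $\cB_n^f$. It is convenient to extend the notation by $A_j := X_j =: B_j$ for $j>i$. The statement will follow once I establish the identities
$$V_j^A = V_j\cap A_j, \qquad V_j^B = V_j\cap B_j \qquad (0 \le j \le n),$$
because the decomposition propagates downward: a straightforward induction shows $A_{j-1}\sqcup B_{j-1}=f_j^{-1}(A_j)\sqcup f_j^{-1}(B_j) = f_j^{-1}(X_j) = X_{j-1}$, so $V_0 = V_0\cap X_0 = (V_0\cap A_0)\sqcup(V_0\cap B_0)$ and therefore $|V_0| = |V_0^A|+|V_0^B|$, which is the claim.

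The identities above I would prove by decreasing induction on $j$. The base $j=n$ is trivial. For the inductive step, two elementary facts are needed: (a) $f_j^{-1}$ commutes with intersection; (b) for any subgroup $H\le G$ and any $H$-invariant subsets $Y,Z$ of a $G$-set, $(Y\cap Z)^H = Y^H\cap Z^H$. Fact (b) applies because $A_j$, $B_j$ are $G$-invariant (hence $S_{j-1}$-invariant) and $V_j\subseteq X_j^{S_j}\subseteq X_j^{S_{j-1}}$, using $S_{j-1}\le S_j$. Combined with the identity $A_{j-1}=f_j^{-1}(A_j)$, the induction step becomes
$$V_{j-1}^A = \bigl(f_j^{-1}(V_j\cap A_j)\bigr)^{S_{j-1}} = \bigl(f_j^{-1}(V_j)\cap A_{j-1}\bigr)^{S_{j-1}} = V_{j-1}\cap A_{j-1},$$
where the last equality uses (b) together with the observation that $V_{j-1}$ is pointwise fixed by $S_{j-1}$ (so that intersecting with $A_{j-1}^{S_{j-1}}$ is the same as intersecting with $A_{j-1}$). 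The same computation works with $B$ in place of $A$.

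There is no genuine obstacle: the argument is essentially bookkeeping with preimages and fixed points. The only delicate point is to track the invariance of the various subsets at each stage so that fact (b) is legitimately applied; once this is observed, the proof reduces to the one-line identity above and the telescoping cardinality computation.
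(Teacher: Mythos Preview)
Your argument is correct and follows the same idea as the paper's proof: both track the iterated fixed-point set $V_j$ through the splitting $X_i=A_i\sqcup B_i$ to see that $V_0$ decomposes as $(V_0\cap A_0)\sqcup(V_0\cap B_0)$, with your version simply making the induction explicit. One small caveat: the identity $A_{j-1}=f_j^{-1}(A_j)$ fails at $j=i+1$ under your convention $A_{i+1}=X_{i+1}$, but only the inclusion $A_{j-1}\subseteq f_j^{-1}(A_j)$ is needed (equivalently, the first $f_j^{-1}$ in your display is implicitly the preimage inside $A_{j-1}$), so the computation still goes through.
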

\begin{proof}
Let $x_n$ be an element of $X_{n}^{S_{n}}$ and $i$ be an integer such that $X_i= A_i \sqcup B_i$.Then, either $f_{i+1}^{-1} \circ \dots \circ f_{n}^{-1}(x_n) \in A_i$ and $f_{j+1}^{-1} \circ \dots \circ f_{n}^{-1}(x_n) \in A_j$ for any $j<i$,\\
 or $f_{i+1}^{-1} \circ \dots \circ f_{n}^{-1}(x_n) \in B_i$ and $f_{j+1}^{-1} \circ \dots \circ f_{n}^{-1}(x_n) \in B_j$ for any $j<i$.
 Hence $\phi_{\bar \cS} (\cX^f_n)= \vert \big(A_0\cap f^{-1}(\cX_n^f)^{\bar \cS} \big)\sqcup \big( B_0\cap f^{-1}(\cX_n^f)^{\bar \cS}\big) \vert = \phi_{\bar \cS} (\cA^f_n) + \phi_{\bar \cS} (\cB^f_n)$.
 \end{proof}

\begin{Defi}  Let $G$ be a finite group.\\
 We denote $B_n(G)$ the {\em Grothendieck group}  of $\cC^G_n$ with respect to the relations $\Gamma$ that is, the quotient   $$B_n(G):= \Omega_n(G)/\tilde \Omega_n(G) $$ of the free abelian group $\Omega_n(G)$ on the set of isomorphism classes of $(n,G)$-simplices by the subgroup $\tilde \Omega_n(G)$ generated by the formal differences $[\cX^f_n]- [\cA^f_n]-[\cB^f_n]$.

\end{Defi}

\begin{Rema}\label{Burn}${}$
\begin{itemize}
\item The construction satisfies the following property: if $\phi: \cC^G_n \ra A $ is a map from  $\cC^G_n$  to an abelian group $A$ given that $\phi (\cX^f_n )$ depends only on the isomorphism  class of $\cC^G_n$ and $\phi (\cX^f_n )= \phi (\cA^f_n )+\phi (\cB^f_n )$ for any element $(\cX^f_n , \cA^f_n ,\cB^f_n )$ in $\Gamma$, then there exists a unique $\bar \phi: B_n(G) \ra A$ such that $\phi (\cX^f_n)= \bar \phi ([\cX^f_n])$ for any $(n,G)$-simplex $\cX^f_n$.
 Let now  $(G/ \bar \cS)_n$ be a fixed $(n,G)$-simplex.\\ The function $\cX^f_n \mapsto   \big\vert  \Hom_{\cC_n^G} \big((G/ \bar \cS)_n,  \cX^f_n \big) \big\vert$ defined on the class of $(n,G)$-simplices (up to isomorphism), with values in $\ZZ$ extends to a group homomorphism $B_n(G) \ra \ZZ$.  
So, to see that $B_n(G)$ is non-trivial, it suffices to find $\cX_n^f$ with $\big \vert  \Hom_{} \big((G/ \bar \cS)_n,  \cX^f_n \big) \big \vert \neq 0$. Since 
$$\big \vert  \Hom_{} \big((G/ \bar \cS)_n,  (G/ \bar \cS)_n \big) \big\vert \neq 0 ,$$
 we have that $B_n (G) \neq 0$.
 \item In the special where $n=0$, one recovers the classical Burnside group $B(G)$ (see \cite{SBouc}), and for $n=1$, we have $B_1(G)= \Xi(G)$ the slice Burnside group introduced by Bouc in \cite{S.Bouc1}.
\end{itemize}
\end{Rema}
\begin{Prop}\label{opera}
The functor $d_j $, $j=0,\dots, n$   (resp.  $s_i $, $i=0,\dots, n-1$)
induces a group  homomorphism $$d_j : B_n(G) \ra B_{n-1}(G) ~~~~\big(\text{resp.}~   s_i: B_{n-1}(G) \ra B_{n}(G) \big) \footnote{Note that the maps $d_j $ and  $s_i $ should more appropriately be labeled, but we stick common practice and use $d_j $ and $s_i $ for face and degeneracy wherever we find them.}$$ such that the identities (\ref{1}), (\ref{2}), (\ref{3}) hold.
\end{Prop}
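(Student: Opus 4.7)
The plan is to define $d_j$ and $s_i$ on $B_n(G)$ by specifying their values on generators via the underlying simplicial functors on $\cC_n^G$, and then to verify that they descend through the quotient by the defining relations $\Gamma$ and satisfy the simplicial identities. The first ingredient, that $d_j$ and $s_i$ are functorial on $\cC_n^G$, is immediate: inserting an identity $X_k\to X_k$, composing two consecutive structure maps $f_{i+1}\circ f_i$, or deleting a boundary object commutes with families $(\mu_k)_{0\le k\le n}$ satisfying $\mu_k\circ f_k=g_k\circ\mu_{k-1}$. Hence they preserve isomorphism classes of $(n,G)$-simplices and extend to $\ZZ$-linear maps $\Omega_n(G)\to\Omega_{n-1}(G)$ and $\Omega_{n-1}(G)\to\Omega_n(G)$ respectively.

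The core step is to show that these linear maps send $\tilde\Omega_n(G)$ into the corresponding relations subgroup in the target. Fix a defining triple $(\cX_n^f,\cA_n^f,\cB_n^f)_i\in\Gamma$ with $X_i=A_i\sqcup B_i$. I would perform a case analysis on the position of the face or degeneracy relative to $i$. For $s_k$, the inserted identity either shifts the sliced position to $i+1$ (when $k\le i$) or leaves it at $i$ (when $k>i$), so the image triple $(s_k\cX_n^f,s_k\cA_n^f,s_k\cB_n^f)$ is manifestly in $\Gamma$ inside $\cC_{n+1}^G$. For $d_j$ with $j\neq i$, the face map acts on a part of the simplex disjoint from the sliced object $X_i$, and the image triple lies in $\Gamma$ with index $i$ or $i-1$ depending on whether $j>i$ or $j<i$. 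For $j=i$ with $0<i<n$, the composed map $f_{i+1}\circ f_i$ inherits the preimage decomposition $X_{i-1}=A_{i-1}\sqcup B_{i-1}$, making $d_j\cX_n^f$ an $(i-1)$-sliceable simplex with decomposition $(d_j\cA_n^f,d_j\cB_n^f)$.

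The main obstacle is to handle the edge cases where the face map eliminates the sliced index itself, namely $d_0$ on a $0$-sliceable triple and $d_n$ on an $n$-sliceable triple. In such situations the triple collapses (for instance $d_0\cX_n^f=d_0\cA_n^f=d_0\cB_n^f$ in the first case, and $d_n\cX_n^f=d_n\cA_n^f\sqcup d_n\cB_n^f$ in the second), and one must invoke the auxiliary coproduct relation $[\cY\sqcup\cZ]=[\cY]+[\cZ]$ in $B_{n-1}(G)$, itself a consequence of applying sliceability at the extreme index to the coproduct inside $\cC_{n-1}^G$, in order to verify that the image of the defining relation still vanishes. Once this is settled, $d_j$ and $s_i$ are well-defined $\ZZ$-linear homomorphisms $B_n(G)\to B_{n-1}(G)$ and $B_{n-1}(G)\to B_n(G)$; the simplicial identities (\ref{1}), (\ref{2}), (\ref{3}) then transfer automatically, since they already hold at the level of the functorial operations on $(n,G)$-simplices by the standard verification for the nerve of a category and our induced homomorphisms are defined on generators by precisely those operations.
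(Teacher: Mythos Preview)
Your overall strategy coincides with the paper's: verify that $d_j$ and $s_i$ carry each generating relation $[\cX^f_n]-[\cA^f_n]-[\cB^f_n]$ of $\tilde\Omega_n(G)$ to a relation in the target, then invoke the universal property. You are in fact more careful than the paper in isolating the boundary cases $j=i=0$ and $j=i=n$; the paper's terse index-shift (``index $k-1$ if $k\ge j$'') literally yields index $-1$ when $k=j=0$ and so glosses over exactly the case you flag.

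However, your resolution of the case $j=i=0$ is not correct, and in fact cannot be repaired as stated: the assignment $\cX^f_n\mapsto\pi\big(d_0\cX^f_n\big)$ does not respect the $0$-sliceable relations. If $X_0=A_0\sqcup B_0$ with $A_0,B_0\neq\emptyset$, then $\cA^f_n$ and $\cB^f_n$ share the entire tail $X_1\to\cdots\to X_n$, so $d_0\cX^f_n=d_0\cA^f_n=d_0\cB^f_n$, and the image of the generator $[\cX^f_n]-[\cA^f_n]-[\cB^f_n]$ is $-\pi(X_1\to\cdots\to X_n)$, which is nonzero in $B_{n-1}(G)$ whenever $X_1\neq\emptyset$. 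The coproduct identity you invoke cannot help, since there is no coproduct present to split. Concretely, take $G=1$ and $n=1$: the $0$-sliceable relation gives $[\{1,2\}\to\{\ast\}]=2\,[\{\ast\}\to\{\ast\}]$ in $B_1(1)$, but applying $d_0$ to the underlying simplices yields $[\{\ast\}]$ on the left and $2\,[\{\ast\}]$ on the right in $B_0(1)\cong\ZZ$, a contradiction. Thus the rule $[\cX^f_n]\mapsto[d_0\cX^f_n]$ does not descend to a homomorphism $B_n(G)\to B_{n-1}(G)$. The obstacle you identified is genuine, and neither your argument nor the paper's index-shift formula closes it; the case $d_0$ requires either a different definition of the induced map or an argument that the $0$-sliceable relations are redundant among the generators of $\tilde\Omega_n(G)$, and the latter is not true.
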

\begin{proof}
Indeed,  for any $(n,G)$-simplices $\cX^f_n$ with decomposition $[\cX^f_n, \cA^f_n, \cB^f_n]_k$,  i.e.
$\xymatrix{A_0 \sqcup B_0 \ar[r]^{f_1} &  A_{1} \sqcup B_1 \ar[r]^{f_{2}} &  \ar[r] \dots A_{k} \sqcup B_k\ar[r]^{f_{k+1}} & X_{k+1} \dots \ar[r]^{f_{n-1}}   &X_{n-1} \ar[r]^{f_{n}} &  X_{n}} $\\
then there is a decomposition of $d_j( \cX^f_n)$ as $[d_j( \cX^f_n),d_j(\cA^f_n), d_j( \cB^f_n )]_k$ if $k< j$ and\\  $[d_j( \cX^f_n), d_j(\cA^f_n), d_j( \cB^f_n )]_{k-1}$ if $k\ge j$. So the map which assigns an $(n,G)$-simplex $\cX^f_n$  to the image of  $d_j( \cX^f_n)$ in $B_{n-1}(G)$ induces, by the universal property, a well-defined map from $B_n(G)$ to $B_{n-1}(G)$.\\ 
  Similarly, there is a decomposition of $s_j( \cX^f_n)$ as $[s_j( \cX^f_n), s_j(\cA^f_n) , s_j( \cB^f_n )]_k$ if $k< j$ and  $[s_j( \cX^f_n), s_j(\cA^f_n), s_j( \cB^f_n )]_{k+1}$ if $k\ge j$.
  So  $s_j$  maps  the defining relations   $B_{n-1}(G)$ to those of  $B_n(G)$) and therefore gives a well-defined map (which is a group homomorphism) from $B_{n-1}(G)$ to $B_{n}(G)$. The equalities (\ref{1}), (\ref{2}), (\ref{3}) may be verified easily.

\end{proof}

\begin{Nota} ${}$
\begin{itemize}
\item Let $\pi(\cX^f_n)$ denote the image in $B_n(G)$ of the isomorphism class of $\cX^f_n$.
\item We set $\langle \bar \cG \rangle_G:= \pi((G/\bar \cG)_n)$, for any $n$-slice $\bar \cG=(G_0, G_{1}, \dots, G_n)$.
\item For any $(n,G)$-simplex $\cX_n^f$, we denote by $f(\cX^f_n)$ the $(n,G)$-simplex\\
$\xymatrix{X_0 \ar[r]^{f_1} & f_1( X_{0})\ar[r]^-{f_{2}}&\dots \ar[r]^-{f_{n}} &   f_{n}(f_{n-1}(f_{n-2}(...f_1( X_{0})...)) }.$

\end{itemize}

\end{Nota}
Then,

\begin{Lemm}\label{decomp}
Let $\cX_n^f$ be a $(n,G)$-simplex. Then in the group $B_n(G)$, we have 
$$\pi(\cX^f_n)= \sum\limits_{\substack{x \in [G\backslash X_0]}}  \langle \bar \cG_f^x \rangle_G,$$
where $\bar \cG_f^x$ denotes the $n$-slice $(G_x, G_{f_1(x)},\dots ,G_{f_{n}...f_{2}f_1(x)}  )$ and $G_{\bullet}$ denotes the stabilizer of the element $\bullet$.
\end{Lemm}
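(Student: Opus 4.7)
My plan is a two-stage reduction via the sliceability relations of Proposition \ref{defining}: first partition $X_0$ into $G$-orbits using $0$-sliceability, and then, for each orbit, shrink every downstream $X_j$ to the orbit through $f_j\cdots f_1(x)$ using $j$-sliceability.

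For the first stage, I would write $X_0 = \bigsqcup_{x \in [G\backslash X_0]} Gx$ and peel off one orbit at a time. Each peeling is a valid $0$-sliceable decomposition (both $A_0, B_0$ nonempty) that leaves the tail $X_1 \to \cdots \to X_n$ intact, so Proposition \ref{defining} yields
$$\pi(\cX_n^f) = \sum_{x \in [G\backslash X_0]} \pi\bigl(\cX_n^{f,x}\bigr), \qquad \cX_n^{f,x} := \bigl(Gx \xrightarrow{f_1} X_1 \xrightarrow{f_2} \cdots \xrightarrow{f_n} X_n\bigr).$$

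For the second stage, fix an orbit representative $x$ and reduce $\cX_n^{f,x}$ to the orbit chain $(Gx \to Gf_1(x) \to \cdots \to Gf_n\cdots f_1(x))$ by induction on $j = 1, \ldots, n$. At step $j$ one splits $X_j = Gf_j\cdots f_1(x) \sqcup X_j'$ and applies $j$-sliceability when $X_j' \neq \varnothing$; since positions $<j$ already consist of the orbits $Gf_k\cdots f_1(x)$ whose $f_j$-images lie inside $Gf_j\cdots f_1(x)$, the derived preimages $B_k = f_{k+1}^{-1}(B_{k+1})$ are empty for every $k < j$. Thus the residual summand has $\varnothing$ at positions $0, \ldots, j-1$, and the main obstacle is to show that such $\varnothing$-source simplices have trivial image in $B_n(G)$. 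I would handle this by an auxiliary induction, iterating higher sliceability to reduce each residual to a $\ZZ$-combination of saturated chains $(\varnothing \to \cdots \to \varnothing \to Gy_k \to \cdots)$, which must vanish because they never match a generator $(G/\bar\cG)_n$ of the intended spanning set (consistency with the formal additivity relations forces $\pi$ to vanish on every isomorphism class whose source is empty).

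Once the residuals are discarded, the surviving simplex $(Gx \to Gf_1(x) \to \cdots \to Gf_n\cdots f_1(x))$ is canonically isomorphic in $\cC_n^G$ to $(G/G_x \to G/G_{f_1(x)} \to \cdots \to G/G_{f_n\cdots f_1(x)}) = (G/\bar\cG_f^x)_n$ via $g\cdot f_k\cdots f_1(x) \leftrightarrow g\, G_{f_k\cdots f_1(x)}$ (well defined because the stabilizer chain $G_x \le G_{f_1(x)} \le \cdots \le G_{f_n\cdots f_1(x)}$ is exactly the $n$-slice $\bar\cG_f^x$). Hence $\pi(\cX_n^{f,x}) = \langle \bar\cG_f^x\rangle_G$, which combined with the first stage yields the lemma.
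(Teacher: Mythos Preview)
Your overall two-stage strategy is sound and close to the paper's, but the crucial step---showing that the residual simplices with empty source vanish in $B_n(G)$---is not justified. Your argument that such terms ``never match a generator $(G/\bar\cG)_n$ of the intended spanning set'' is circular: the fact that the $\langle\bar\cS\rangle_G$ span $B_n(G)$ is exactly the corollary one draws \emph{from} this lemma, so it cannot be invoked here. Without an independent reason why $\pi(\varnothing\to Y_1\to\cdots\to Y_n)=0$, your stage~2 does not terminate.

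The fix is short once you see it. The defining relations should be read as allowing the degenerate case where one of the two pieces at position $i$ is empty (the paper itself uses $X_n=X_n\sqcup\varnothing$ in its proof, despite the ``non-empty'' clause in the definition of $i$-sliceable). Taking $i=0$ and writing $X_0=X_0\sqcup\varnothing$ gives
\[
\pi(\cX_n^f)=\pi(\cX_n^f)+\pi(\varnothing\to X_1\to\cdots\to X_n),
\]
so every simplex with empty source has trivial image. With this in hand, your stage~2 goes through immediately.

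It is worth noting that the paper organizes the argument slightly differently, and its ordering is a bit cleaner: it first passes from $\cX_n^f$ to the ``image simplex'' $f(\cX_n^f)=(X_0\to f_1(X_0)\to f_2f_1(X_0)\to\cdots)$, in which all maps are surjective. At that point the orbit decomposition of $X_0$ is a genuine coproduct decomposition of $f(\cX_n^f)$ in $\cC_n^G$ (every level splits simultaneously), and a single application of $n$-sliceability with all pieces nonempty yields $\pi(f(\cX_n^f))=\sum_x\pi(\cO_x\to\cO_{f_1(x)}\to\cdots)$ with no empty-source residuals at all. The empty-piece issue is thereby confined to the preliminary step $\pi(\cX_n^f)=\pi(f(\cX_n^f))$. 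Your approach and the paper's are equivalent in content; the paper's ordering just localizes the one delicate point.
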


\begin{proof}
Note first  that in the group $B_n(G)$ we have that $\pi (\cX^f_n)= \pi(f(\cX^f_n))$. Indeed
writing $X_n= X_n \sqcup \emptyset$, we get  by the defining relations of $B_n(G)$ that
\begin{eqnarray*}
\pi (\cX^f_n)&=& \pi(\emptyset) +  \pi(\xymatrix{f_{1}^{-1}(X_1) \ar[r]^{f_1} & f_{2}^{-1}( X_{2})\ar[r]^-{f_{2}}&\dots  f_{n}^{-1}( X_{n}) \ar[r]^-{f_{n}} &   X_{n}})\\
&=& \pi(\xymatrix{f_{1}^{-1}(X_1) \ar[r]^{f_1} & f_{2}^{-1}( X_{2})\ar[r]^-{f_{2}}&\dots  f_{n}^{-1}( X_{n}) \ar[r]^-{f_{n}} &   X_{n}})\\
&=& \pi(f(\cX^f_n)).
 \end{eqnarray*}
Further, 
\begin{eqnarray*}
\pi(f(\cX^f_n)) &=&  \pi(\xymatrix{\coprod\limits_{\substack{x \in [G\backslash X_0]}} \cO_x \ar[r]^{f_1} & \coprod\limits_{\substack{x \in [G\backslash X_0]}} \cO_{f_1(x)}\ar[r]^-{f_{2}}&\dots \coprod\limits_{\substack{x \in [G\backslash X_0]}} \cO_{f_{n}(f_{n-1}(...f_1( x)...)) }})\\
&=& \sum\limits_{\substack{x \in [G\backslash X_0]}}  \pi( \emph{\textunderbrace{ \xymatrix{ \cO_x \ar[r]^{f_1} &  \cO_{f_1(x)}\ar[r]^-{f_{2}}&\dots \cO_{f_{n}(f_{n-1}(...f_1( x)...)) }} }{$\cX^{f\vert \cO}_n $}} )\\
\end{eqnarray*}
where the last equality follows from the defining relations of $B_n(G)$. Now, the $(n,G)$-simplex $\cX^{f\vert \cO}_n$ is obviously isomorphic to the indecomposable $(n,G)$-simplex $$\!\xymatrix{G/G_x\ar[r]^-{p_1} &  G/G_{f_1(x)}\ar[r]^-{p_{2}} &\dots   G/G_{f_{n}(f_{n-1}(...f_1( x)...)) }\!}.$$

\end{proof}

Since two indecomposable $(n,G)$-simplices $ (G/ \bar \cS)_n$ and  $(G/ \bar \cT)_n$ are isomorphic if and only if the $n$-slices $ \bar \cS$ and $\bar \cT$ are conjugate we have the following  corollary:

\begin{Coro}
The group $B_n(G)$ is generated by the elements $\langle \bar \cS \rangle_G$ where $\bar \cS $ runs through a set $[\Pi_n(G)]$ of representatives of conjugacy classes of $n$-slices of $G$. 
\end{Coro}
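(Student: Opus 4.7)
The plan is to assemble this directly from Lemma \ref{decomp} together with Corollary \ref{conj}(3). There is essentially no obstacle here; the statement is a packaging of results already established, and the only point to be careful about is that the conjugacy classes in $[\Pi_n(G)]$ give a well-defined indexing set.

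First I would recall that $B_n(G)$ is by construction a quotient of the free abelian group $\Omega_n(G)$ on isomorphism classes of $(n,G)$-simplices. Hence the set $\{\pi(\cX^f_n) \mid \cX^f_n \in \cC^G_n\}$ generates $B_n(G)$ as an abelian group. It therefore suffices to show that every such $\pi(\cX^f_n)$ is a $\ZZ$-linear combination of elements of the form $\langle \bar\cS \rangle_G$ with $\bar\cS \in [\Pi_n(G)]$.

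Next I would apply Lemma \ref{decomp}, which yields
\[
\pi(\cX^f_n) \;=\; \sum_{x \in [G\backslash X_0]} \langle \bar\cG_f^x \rangle_G,
\]
expressing $\pi(\cX^f_n)$ as a sum of elements $\langle \bar\cG \rangle_G$ indexed by the $n$-slices $\bar\cG_f^x = (G_x, G_{f_1(x)}, \dots, G_{f_n \cdots f_1(x)})$. Note that the inclusions $G_x \le G_{f_1(x)} \le \cdots \le G_{f_n \cdots f_1(x)}$ hold because $f_i$ is $G$-equivariant (a stabilizer can only grow under the image of an equivariant map), so each $\bar\cG_f^x$ is indeed an $n$-slice in the sense of the definition.

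Finally I would invoke Corollary \ref{conj}(3): two indecomposable $(n,G)$-simplices $(G/\bar\cS)_n$ and $(G/\bar\cT)_n$ are isomorphic if and only if $\bar\cS =_G \bar\cT$. Hence $\langle \bar\cS \rangle_G = \langle \bar\cT \rangle_G$ whenever $\bar\cS$ and $\bar\cT$ are conjugate, so each $\langle \bar\cG_f^x \rangle_G$ appearing in the sum above equals $\langle \bar\cS \rangle_G$ for the unique representative $\bar\cS \in [\Pi_n(G)]$ with $\bar\cS =_G \bar\cG_f^x$. Combining, every generator $\pi(\cX^f_n)$ of $B_n(G)$ lies in the $\ZZ$-span of $\{\langle \bar\cS \rangle_G : \bar\cS \in [\Pi_n(G)]\}$, which proves the corollary.
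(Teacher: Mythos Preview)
Your proof is correct and follows exactly the approach the paper intends: the corollary is stated immediately after Lemma~\ref{decomp}, with the preceding sentence pointing to Corollary~\ref{conj}(3) as the reason conjugate $n$-slices give the same generator. Your write-up simply spells out these two steps in full.
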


\begin{Prop}${}$
The product of $(n,G)$-simplices induces a commutative { ring structure} on $B_n(G)$ with identity  $\textbf e_n:= [\!\xymatrix{ \bullet \ar[r]^-{} &\dots \bullet \ar[r]^-{} &  \bullet \!}]$, where $\bullet$ is a $G$-set of cardinality $1$. This ring is called the {\em $n$-simplicial ring} of the finite group $G$.\\
Moreover,  the morphisms $d_j$ ($j=1, \dots, n$) and $s_i$ ($i=0, \dots, n-1$) in Proposition~\ref{opera} define morphisms of rings.
\end{Prop}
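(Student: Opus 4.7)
The plan is to import the commutative monoid structure of the category $\cC_n^G$ (its coordinate-wise Cartesian product, already established to exist) and show that it descends to $B_n(G)$. Concretely, for $(n,G)$-simplices $\cX^f_n$ and $\cY^g_n$ define
$$\cX^f_n \times \cY^g_n = (\xymatrix{X_0 \times Y_0 \ar[r]^-{f_1 \times g_1} & X_1 \times Y_1 \ar[r] & \cdots \ar[r]^-{f_n \times g_n} & X_n \times Y_n}),$$
and extend $\ZZ$-bilinearly. The commutativity, associativity, and the fact that $\textbf{e}_n$ acts as identity are automatic from the universal properties of products in ${}_GSet$ (the canonical isomorphisms $X \times Y \cong Y \times X$, $(X \times Y) \times Z \cong X \times (Y \times Z)$, and $\bullet \times X \cong X$ lift coordinate-wise to canonical isomorphisms of $(n,G)$-simplices).

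The main content is to verify that $\times$ respects the defining relations of $B_n(G)$, so that it indeed factors through the quotient $\Omega_n(G)/\tilde\Omega_n(G)$. Fix an $(n,G)$-simplex $\cY^g_n$ and consider the map $\cX^f_n \mapsto \pi(\cX^f_n \times \cY^g_n)$ on the set of $(n,G)$-simplices. By the universal property of Remark \ref{Burn}, it suffices to check that if $[\cX^f_n, \cA^f_n, \cB^f_n]_i \in \Gamma$, then
$$\pi(\cX^f_n \times \cY^g_n) = \pi(\cA^f_n \times \cY^g_n) + \pi(\cB^f_n \times \cY^g_n).$$
This reduces to exhibiting $[\cX^f_n \times \cY^g_n, \cA^f_n \times \cY^g_n, \cB^f_n \times \cY^g_n]_i \in \Gamma$, which is routine: at level $i$, $(X_i \times Y_i) = (A_i \sqcup B_i) \times Y_i = (A_i \times Y_i) \sqcup (B_i \times Y_i)$, and the compatibility $(f_j \times g_j)^{-1}(A_j \times Y_j) = A_{j-1} \times Y_{j-1}$ (and similarly with $B$) for $j \le i$ propagates the decomposition back through the preimages, since taking preimages commutes with Cartesian product. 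By symmetry in the two arguments, the same argument in the second variable gives a well-defined bilinear multiplication on $B_n(G)$.

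For the second assertion, we observe that the face and degeneracy functors on $\cC_\bullet^G$ are compatible with Cartesian products. For the degeneracies $s_i$ this is immediate, since $s_i$ only inserts an identity morphism on $X_i \times Y_i$. For the faces $d_j$ with $0 < j < n$, the composition $(f_{j+1} \times g_{j+1}) \circ (f_j \times g_j) = (f_{j+1} f_j) \times (g_{j+1} g_j)$ shows $d_j(\cX \times \cY) \cong d_j(\cX) \times d_j(\cY)$; the cases $j=0$ and $j=n$ are trivial truncations that clearly factor through products. Moreover both $d_j$ and $s_i$ send $\textbf{e}_n$ to the corresponding unit in $B_{n-1}(G)$ or $B_{n+1}(G)$, since a product of singletons is a singleton. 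Combined with Proposition~\ref{opera}, this shows $d_j$ and $s_i$ are ring homomorphisms.

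The most delicate step is the verification that $\times$ respects the relations in $\Gamma$; it is only an obstacle in the bookkeeping sense, since one must carefully track the inductive definition $A_{j-1} = f_j^{-1}(A_j)$ through the product, but no new idea is needed beyond the fact that preimages commute with Cartesian products and that disjoint unions distribute over products in ${}_GSet$.
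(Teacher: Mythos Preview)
Your argument is correct and follows essentially the same route as the paper's own proof: both verify that a decomposition $X_i = A_i \sqcup B_i$ induces the decomposition $(A_i \times Y_i) \sqcup (B_i \times Y_i)$ at level $i$ of the product, with preimages propagating correctly because $(f_j \times g_j)^{-1}(A_j \times Y_j) = A_{j-1} \times Y_{j-1}$, and both use the identity $(f_{j+1}\circ f_j)\times(g_{j+1}\circ g_j)=(f_{j+1}\times g_{j+1})\circ(f_j\times g_j)$ for the multiplicativity of $d_j$ and $s_i$. Your write-up is in fact somewhat cleaner than the paper's, which carries redundant notation such as $g_j^{-1}(Y_j)$ for what is simply $Y_{j-1}$.
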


\begin{proof} 

 We have to show that the product of $(n,G)$-simplices induces a well-defined bilinear product $B_n(G) \times B_n(G) \ra B_n(G)$.
Let $\cX_n^f$ be a  $(n,G)$-simplex such that there is a decomposition $[\cX^f_n, \cA^f_n, \cB^f_n]_i$, and let $\cY^g_n$ be any $(n,G)$-simplex. We set $\cZ_n^h:= \cX_n^f \times \cY^g_n$. Then  the $(n,G)$-simplex $\cZ_n^h$ can be visualized  as follows
$$\!\xymatrix{(\!A_0 \times Y_0) \sqcup (B_0 \times Y_0) \ar[r]^-{f_0 \times g_0} & \dots (A_i \times Y_i) \!\sqcup \! (B_i \! \times \!Y_i) \!\ar[r]^-{f_{i} \! \times \! g_i} & X_{i+1}  \!  \times  \! Y_{i+1}\dots  \ar[r]^-{h_{n-1}} & \!   X_n \!  \times  \! Y_n\!},$$
since $(A_i \sqcup B_i) \times Y_i = \underbrace{(A_i \times Y_i)}_{C_i} \sqcup \underbrace{ (B_i \! \times \!Y_i)}_{D_i}  $.
Hence 
\begin{eqnarray*}
\pi(\cZ_n^h)&=& \pi(\!\xymatrix{C_0 \sqcup D_0\ar[r]^-{h_0} & \dots C_i \sqcup D_i \!\ar[r]^-{h_{i}} & X_{i+1}  \!  \times  \! Y_{i+1}\dots  \ar[r]^-{h_{n-1}} & \!   X_n \!  \times  \! Y_n\!}))\\
&=&  \pi(\!\xymatrix{C_0\ar[r]^-{h_0} & \dots C_i \!\ar[r]^-{h_{i}} & X_{i+1}  \!  \times  \! Y_{i+1}\dots  \ar[r]^-{h_{n-1}} & \!   X_n \!  \times  \! Y_n\!}))\\
&+&  \pi(\!\xymatrix{D_0\ar[r]^-{h_0} & \dots D_i \!\ar[r]^-{h_{i}} & X_{i+1}  \!  \times  \! Y_{i+1}\dots  \ar[r]^-{h_{n-1}} & \!   X_n \!  \times  \! Y_n\!}))\\
&=&  \pi(\!\xymatrix{ f_1^{-1}(A_1) \times g_1^{-1}(Y_1) \ar[r]^-{h_0}&f_2^{-1}(A_2) \times g_2^{-1}(Y_2) \dots \!\ar[r]^-{h_{i}} & A_{i}  \!  \times  \! Y_{i}\dots  \ar[r]^-{h_{n-1}} & \!   X_n \!  \times  \! Y_n\!}))\\
&+&  \pi(\!\xymatrix{ f_1^{-1}(B_1) \times g_1^{-1}(Y_1) \ar[r]^-{h_0}&f_2^{-1}(B_2) \times g_2^{-1}(Y_2) \dots \!\ar[r]^-{h_{i}} & A_{i}  \!  \times  \! Y_{i}\dots  \ar[r]^-{h_{n-1}} & \!   X_n \!  \times  \! Y_n\!}))\\
\end{eqnarray*}
Clearly, we have (it suffices to set $Y_i=Y_i \sqcup \emptyset$)
$$  \pi(\!\xymatrix{ g_1^{-1}(Y_1) \ar[r]^-{g_0}& g_2^{-1}(Y_2) \dots \!\ar[r]^-{g_{i}} &   \! Y_{i}\dots  \ar[r]^-{g_{n-1}} & \!   \! Y_n\!})= \pi(\cY^g_n).$$
So $\pi(\cZ_n^h)= \pi(\cA_n^f \times \cY^g_n ) + \pi(\cB_n^f \times \cY^g_n )$,
this shows that the product preserves the defining relations of $B_n(G)$. Now it is obvious to see that the product is associative, commutative and admits $\textbf e_n$ as an identity element.\\
 Let $\cY^g_n$ be $(n,G)$-simplex. Since $(f_{j+1}\circ f_j) \times (g_{j+1}\circ g_j)=(f_{j+1} \times g_{j+1})\circ (f_{j} \times g_{j})$, we have that 
$$d_j\big( \cX^f_n  \times \cY^g_n ) = d_j(\cX^f_n  ) \times d_j( \cY^g_n ) , ~~ s_j\big( \cX^f_n  \times \cY^g_n ) = s_j(\cX^f_n  ) \times s_j( \cY^g_n ) $$
and so  the induced maps $d_j$ and $s_j$ are ring homomorphisms.
  Furthermore $d_j (\textbf e_n)= \textbf e_{n-1}$ and $s_j (\textbf e_n)= \textbf e_{n+1}$.   
\end{proof}

\begin{Rema}

Letting $\Delta$  the simplicial category that is $\Delta$ has as objects all finite ordinal numbers $[n]=\{0,1, \dots, n\}$
and as morphisms $f: [n] \ra [m]$ all monotone maps; that is, the maps $f$ such that  $f(i)\le f(j)$ if $i<j$.\\
 Then an induced functor $\cC_n^G \ra \cC_m^G$  for any finite $G$ and an induced $G$-equivariant map $\Pi_n(G) \ra \Pi_{m}(G) $  and a group homomorphism $B_n(G) \ra B_{m}(G)$ given by, $$\cX^f_n  \mapsto  s^{n-1}_{j_k} \dots s^{n-k}_{j_1}d^{n-k}_{i_1}\dots d^m_{i_h} \cX^f_n$$ where $k$ and $h$ satisfy the following conditions: 
\begin{itemize}
\item $n+k-h=m$,
\item $0 \le i_1 < \dots <i_k \le m$,
\item $0 \le j_1 < \dots j_h \le n$,
\item $i_1, \dots,i_k $ are the elements of $[m]$ not in the image of $f$,
\item $j_1, \dots ,i_h $ are the elements of $[n]$ at which $f$ does not increase.
\end{itemize}
\end{Rema}



\begin{Prop}\label{produit} Using the generators of $B_n(G)$, the multiplication is given by

$$\langle \bar \cS \rangle_G . \langle \bar \cT \rangle_G= \sum\limits_{\substack{g \in [S_0\backslash G/ T_0]}}  \langle \bar \cS \cap {}^{g}\bar \cT   \rangle_G$$
\end{Prop}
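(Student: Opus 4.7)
The plan is to unfold the definition of the product, apply the orbit decomposition in Lemma~\ref{decomp}, and read off the answer by identifying the point stabilizers.

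By the ring structure on $B_n(G)$, the product $\langle \bar\cS \rangle_G \cdot \langle \bar\cT \rangle_G$ is the image $\pi((G/\bar\cS)_n \times (G/\bar\cT)_n)$. Unwinding the definition of the product of $(n,G)$-simplices, this is the class of
$$\xymatrix{G/S_0 \times G/T_0 \ar[r]^-{p_1\times q_1} & G/S_1 \times G/T_1 \ar[r]^-{p_2\times q_2} & \cdots \ar[r]^-{p_n\times q_n} & G/S_n \times G/T_n},$$
where $p_i$ and $q_i$ are the canonical projections. The first step is to parametrize the $G$-orbits on the initial set $G/S_0 \times G/T_0$. A standard calculation shows that the map sending $g \mapsto (S_0, gT_0)$ induces a bijection between $[S_0\backslash G/T_0]$ and $G\backslash(G/S_0 \times G/T_0)$, and that the stabilizer of $(S_0, gT_0)$ is $S_0 \cap {}^g T_0$.

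The second step is to compute the stabilizer of the image of $(S_0, gT_0)$ after applying the first $i$ maps. Since each $p_j$ sends $aS_{j-1}$ to $aS_j$ (and similarly for $q_j$), the image of $(S_0, gT_0)$ in $G/S_i \times G/T_i$ is $(S_i, gT_i)$, whose stabilizer is $S_i \cap {}^g T_i$. Because $S_{i-1} \le S_i$ and ${}^g T_{i-1} \le {}^g T_i$, these intersections form a genuine $n$-slice $\bar\cS \cap {}^g\bar\cT := (S_0 \cap {}^g T_0, \dots, S_n \cap {}^g T_n)$ of $G$.

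The last step is to apply Lemma~\ref{decomp} to the $(n,G)$-simplex $(G/\bar\cS)_n \times (G/\bar\cT)_n$: summing over representatives $x=(S_0,gT_0)$ of the $G$-orbits on the initial set yields
$$\pi\bigl((G/\bar\cS)_n \times (G/\bar\cT)_n\bigr) = \sum_{g \in [S_0\backslash G/T_0]} \langle \bar\cS \cap {}^g\bar\cT \rangle_G,$$
which is the claimed formula. The only point requiring any care is the bookkeeping in Step~2, where one has to verify that the stabilizer computation is compatible with the transition maps; this reduces to the trivial observation that conjugation and intersection both preserve the inclusion relation between subgroups, so that the resulting tuple is indeed an $n$-slice.
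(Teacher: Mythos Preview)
Your proposal is correct and follows essentially the same approach as the paper's proof: identify the product as the class of the $(n,G)$-simplex $(G/\bar\cS)_n \times (G/\bar\cT)_n$, parametrize the $G$-orbits on $G/S_0 \times G/T_0$ by double cosets with stabilizers $S_0 \cap {}^gT_0$, track the images and stabilizers through the successive projections to obtain the $n$-slice $\bar\cS \cap {}^g\bar\cT$, and conclude via Lemma~\ref{decomp}. Your write-up is in fact a bit more explicit than the paper's, but the argument is the same.
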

\begin{proof}
Note that each $G$-orbit of the $G$-set $(G/\cS_0) \times (G/ \cT_0)$ determines a double coset $\cS_0g\cT_0$, in the following way:
$$\cO_{(xS_0, yT_0)} \ra S_0x^{-1}yT_0.$$  Conversely, 
the $G$-orbit of $(G/\cS_0) \times (G/\cT_0)$ corresponding to $\cS_0a\cT_0$ consists of all distinct pairs in the collection $\{(x \cS_0, y\cT_0)~\vert ~x^{-1}y \in \cS_0a \cT_0\}$. The stabilizer of the pair  $(\cS_0, g \cT_0)$ is precisely $\cS_0\cap {}^g \cT_0$. Therefore, the orbit of $(G/\cS_0) \times (G/\cT_0)$ corresponding to $\cS_0g \cT_0$ is isomorphic (as $G$-set) to $G/(\cS_0\cap {}^g \cT_0)$, and therefore 
$$(G/\cS_0) \times (G/\cT_0) \cong \coprod\limits_{\substack{g \in  [\cS_0\backslash G/ \cT_0]}} G/(\cS_0 \cap {}^g\cT_0)$$
On the other hand, the image of $(\cS_i, g\cT_i)$ by the map $$(G/\cS_i) \! \times \! (G/\cT_i)  \ra  (G/\cS_{i+1})\! \times \! \!(G/\cT_{i+1})$$ is the pair $(\cS_{i+1}, g\cT_{i+1})$, whose stabilizer is $\cS_{i+1} \cap {}^g \cT_{i+1}$. Hence inductively the result follows from Lemma~\ref{decomp}.
\end{proof}

\section{Ghost maps}
In this section, we examine a map $\Phi^G_n$ that this intrinsically related with the $n$-simplicial ring  $B_n(G$ in the sense that $\Phi^G_n$ may be discovered from the ring structure of $B_n(G)$.
\begin{Prop} \label{TheoBurn}${}$
\begin{enumerate}
\item For a $n$-slice $\bar \cS$ fixed,
the correspondence $\cX_n^f \mapsto \phi_{\bar \cS} \big(\cX^f_n\big)$  extends to a ring homomorphism still denoted by
$$  \phi_{\bar \cS}: B_n(G) \longrightarrow  \ZZ.$$
\item (An analogue of Burnside's Theorem) We let 
$$\Phi^G_n= ( \phi_{\bar \cS}): B_n(G) \longrightarrow  \prod_{\bar \cS \in [\Pi_n(G)]} \ZZ=C_n(G)$$
be the product of the $\phi_{\bar \cS}$.
Then $\Phi^G_n$ is an injective ring homomorphism, with finite cokernel as morphism of abelian groups.  The set $$\{\langle \bar \cS \rangle_G  ~\vert ~  \bar \cS  \in [\Pi_n(G)]\}$$ form a {\em basis} of $B_n(G)$.

\item  Let $R$ be an integral domain, $\phi: B_n(G) \ra R$ any ring homomorphism, and $$T(\phi) := \{ \bar \cS \in \Pi_n(G)~\vert ~ \phi(\langle \bar \cS \rangle_G) \neq 0  \}.$$ Then, there exists exactly one element $\bar \cK \in [\Pi_n(G)]$ that is minimal with respect to $\preceq$ in $T(\phi)$. Moreover, one has $\phi(x)= \phi_{\bar \cK}(x)\cdot 1_R$ for all $x \in B_n(G)$ and this minimal $\bar \cK$ in  $T(\phi)$.
\end{enumerate}
\end{Prop}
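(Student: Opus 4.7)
The plan is to handle the three parts in turn, each reducing to a compact combinatorial core.

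\emph{Part (1).} By Corollary~\ref{conj}(1) the assignment $\cX^f_n \mapsto \phi_{\bar\cS}(\cX^f_n)$ depends only on the isomorphism class, and by Proposition~\ref{defining} it is additive on every defining relation $[\cX^f_n] - [\cA^f_n] - [\cB^f_n]$; the universal property of $B_n(G)$ recorded in Remark~\ref{Burn} then produces the group homomorphism $\phi_{\bar\cS}: B_n(G) \to \ZZ$. For multiplicativity I would observe that $\cX^f_n \times \cY^g_n$ is the \emph{categorical} product in $\cC^G_n$, so
$$\Hom_{\cC^G_n}\bigl((G/\bar\cS)_n, \cX^f_n \times \cY^g_n\bigr) \cong \Hom_{\cC^G_n}\bigl((G/\bar\cS)_n, \cX^f_n\bigr) \times \Hom_{\cC^G_n}\bigl((G/\bar\cS)_n, \cY^g_n\bigr);$$
taking cardinalities and noting $\phi_{\bar\cS}(\textbf{e}_n) = 1$ finishes the argument.

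\emph{Part (2).} I would fix a total ordering of the finite set $[\Pi_n(G)]$ refining the partial order $\preceq_G$. By Corollary~\ref{conj}(4), $\phi_{\bar\cS}(\langle \bar\cT \rangle_G) = 0$ whenever $\bar\cS \not\preceq_G \bar\cT$, while a direct calculation from the explicit formula gives $\phi_{\bar\cS}(\langle \bar\cS \rangle_G) = |N_G(\bar\cS)/S_0| > 0$. Hence the matrix $M = \bigl(\phi_{\bar\cS}(\langle \bar\cT\rangle_G)\bigr)_{\bar\cS,\bar\cT \in [\Pi_n(G)]}$ is upper triangular with nonzero diagonal, so $\det M \ne 0$. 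The corollary to Lemma~\ref{decomp} gives that the family $\{\langle \bar\cS\rangle_G\}_{\bar\cS}$ generates $B_n(G)$; its images under $\Phi^G_n$ are $\ZZ$-linearly independent by triangularity, so it is a basis, and $\Phi^G_n$ is an injective morphism of free $\ZZ$-modules of equal rank whose cokernel is finite of order $|\det M|$.

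\emph{Part (3).} Since $\phi(\textbf{e}_n) = 1$, the finite set $T(\phi)$ is nonempty and admits $\preceq_G$-minimal elements. For uniqueness, if $\bar\cK$ and $\bar\cK'$ were two such minima not $G$-conjugate, Proposition~\ref{produit} together with multiplicativity of $\phi$ gives
$$\phi(\langle \bar\cK\rangle_G)\,\phi(\langle \bar\cK'\rangle_G) = \sum_{g\in[K_0\backslash G/ K'_0]} \phi\bigl(\langle \bar\cK \cap {}^g \bar\cK'\rangle_G\bigr),$$
and every $\bar\cK\cap{}^g\bar\cK'$ lies strictly below $\bar\cK$ modulo $G$-conjugation (equality up to conjugation would force $\bar\cK \preceq_G \bar\cK'$, hence equality of the two minima), so by minimality of $\bar\cK$ each summand vanishes, contradicting the integrality of $R$. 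For the explicit formula I would apply the same product identity to $\langle\bar\cK\rangle_G\cdot\langle\bar\cT\rangle_G$ for arbitrary $\bar\cT$: the only surviving terms are those with $K_i \le {}^g T_i$ for all $i$, and I identify the double-coset count of such $g$ with $\phi_{\bar\cK}(\langle\bar\cT\rangle_G)$. Cancelling the nonzero factor $\phi(\langle\bar\cK\rangle_G)$ in the integral domain $R$ then gives $\phi(\langle\bar\cT\rangle_G) = \phi_{\bar\cK}(\langle\bar\cT\rangle_G)\cdot 1_R$ on every basis element, and hence on all of $B_n(G)$ by linearity.

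The principal obstacle is the combinatorial identification in Part (3) of $|\{g \in [K_0\backslash G/T_0] : K_i \le {}^g T_i \ \forall i\}|$ with the single-coset count $\phi_{\bar\cK}(\langle \bar\cT\rangle_G) = |\{gT_0 \in G/T_0 : \bar\cK^g \preceq \bar\cT\}|$. The key simplification is the $n$-slice condition $K_0 \le K_i$, which forces $K_0 \le N_G(K_i)$ and makes the selection condition $K_0$-invariant on $G/T_0$; since any such coset $gT_0$ is already $K_0$-fixed, each $K_0$-orbit appearing is a singleton, so the single- and double-coset counts coincide.
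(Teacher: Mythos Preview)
Your proof is correct and follows essentially the same route as the paper: Part~(1) via the categorical-product argument, Part~(2) via the triangularity of the pairing $\phi_{\bar\cS}(\langle\bar\cT\rangle_G)$ (you phrase it as an upper-triangular matrix with nonzero diagonal, the paper as a maximal-element contradiction, but these are the same observation), and Part~(3) via the product formula and cancellation in $R$. Your treatment is in fact slightly more careful on the one point the paper glosses over, namely the passage from double cosets $[K_0\backslash G/T_0]$ to single cosets $G/T_0$ in the count that identifies the surviving sum with $\phi_{\bar\cK}(\langle\bar\cT\rangle_G)$; your observation that the condition $K_0\le{}^gT_0$ forces the relevant $K_0$-orbits on $G/T_0$ to be singletons is exactly what is needed, and the paper simply asserts the conclusion.
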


\begin{proof}
\begin{enumerate}
\item Corollary~\ref{defining} shows that $\phi_{\bar \cS}$ the defining relations of $B_n(G)$ are mapped to $0$ by $\phi_{\bar \cS}$. So it induces a well defined map from $B_n(G)$ to $\ZZ$. Now, since the product of $(n,G)$-simplices is the product of the category of $(n,G)$-simplices, it follows that 
$$\phi_{\bar \cS}\big( \cX_n^f . \cY_n^g\big) = \phi_{\bar \cS}(\cX_n^f ) \phi_{\bar \cS}(\cY_n^g).$$ The image of the identity $\textbf e_n$ by $\phi_{\bar \cS}$ is obviously $1$.
\item By definition $\Phi^G_n$ is a ring homomorphism. Suppose that $u \neq 0$ is in the kernel of $\Phi^G_n$. We write $u$ in terms of the generators $$u=  \sum\limits_{\substack{ \bar \cS \in [\Pi_n(G)]}} a_{\bar \cS} \langle \bar \cS \rangle_G.$$
We have a partial ordering on the $ \langle S_0 \dots S_n \rangle_G$ induced by $\preceq$.
Let $ \langle \bar \cS \rangle_G$ be maximal among the generators with $a_{\bar \cS}  \neq 0$. Then $\phi_{\bar \cS}( \langle \bar \cT \rangle_G ) \neq 0$  implies that $ \langle \bar \cS \rangle_G \preceq  \langle \bar \cT \rangle_G$. Hence $$0= \phi_{\bar \cS} (u)=a_{\bar \cS} \phi_{\bar \cS}( \langle \bar \cS \rangle_G )\neq 0,$$
a contradiction.\\
Now by the first part of the proof, $B_n(G)$ has $\ZZ$-rank $\vert [\Pi_n(G)] \vert =rank_{\ZZ} C_n(G)$, and since $\Phi^G_n$ is injective, then $\Im(\Phi^G_n)$ and $C_n(G)$ have the same rank, and so,  
the cokernel of $\Phi^G_n$ is finite.
\item The set $T(\phi)$ is not empty because $\textbf e_n \notin \Ker \phi$.\\ Let $\bar \cS$ be minimal in $T(\phi)$ with respect to the relation $\preceq$. Then by Proposition~\ref{produit},  for any $n$-slice $\bar \cT$
\begin{eqnarray*}
\langle \bar \cS \rangle_G.\langle \bar \cT \rangle_G&=&\sum\limits_{\substack{g \in [S_0\backslash G/ T_0]}}  \langle S_0 \cap {}^g T_0,\dots ,  S_n \cap {}^g T_n \rangle_G\\
\end{eqnarray*}
Since $\phi(\langle \bar \cS \rangle_G.\langle \bar \cT \rangle_G)= \phi(\langle \bar \cS \rangle_G).\phi(\langle \bar \cT \rangle_G) \neq 0$ in $R$, there exists $ g \in S_0\backslash G/ T_0$ such that $\phi(\bar \cS \cap {}^g \bar \cT) \neq 0$. Hence, by minimality of $\bar \cS$, we have that those $g$ ranges over the set $\{g \in  G/ T_0 ~\vert ~ S_n \le {}^g T_n  \dots \dots S_0 \le {}^g T_0 \}$. So $\phi(\langle \bar \cS \rangle_G).\phi(\langle \bar \cT \rangle_G )= \phi_{\bar \cS}\big(\langle \bar \cT \rangle_G \big)\phi(\langle \bar \cS \rangle_G$. Since $R$ is an integral domain, we can divide both sides by $\phi(\langle \bar \cS \rangle_G )$ and we then have $\phi(\langle \bar \cT \rangle_G)= \phi_{\bar \cS}\big(\langle \bar \cT \rangle_G \big)1_R$.
 So by linearity, $\phi(x)= \phi_{\bar \cS}(x)\cdot 1_R$ for all $x \in B_n(G)$. Now, if $\bar \cK$ is another minimal element, then $\phi(\langle \bar \cK \rangle_G)= \phi_{\bar \cS}(\langle \bar \cK \rangle_G)$ and $\phi( \langle \bar \cK \rangle_G)= \phi_{\bar \cK}(\langle \bar \cK \rangle_G)$. So $\bar \cS \preceq_G \bar \cK$ and by symmetry $\bar \cS =_G \bar \cK$.
 \end{enumerate}
\end{proof}
\begin{Rema} ${}$
\begin{itemize}
\item If $R$ is an integral domain, then two homomorphisms $\phi, \phi': B_n(G) \ra R$ coincide if and only if they have the same kernel. Moreover, any homomorphism $\phi: B_n(G) \ra R$ factors through some $\phi_{\bar \cS}: B_n(G) \ra \ZZ$ and the unique homomorphism $\ZZ \ra R$ given by $n \mapsto n.1_R$.
\item The ring $B_n(G)$ is finitely generated as  $\ZZ$-module, hence is a noetherian ring.\\ For each $n$-slice $\bar \cS$, the kernel of $\phi_{\bar \cS}$ is a prime ideal, since $\ZZ$ is an integral domain, and the intersection of all those kernels for $n$-slices $\bar \cS$ of $G$ is zero. In particular, the ring $B_n(G)$ is reduced (the intersection of all the prime ideals of $B_n(G)$  is zero).
\item Since $C_n(G)$ is  an integral over $B_n(G)$, the Theorem of Cohen-Seidenberg implies that their Krull dimensions coincide, and every prime ideal of $B_n(G)$ comes from $C_n(G)$.

\item The point 2) of the proposition shows in particular that the set
$$\{ \langle \bar \cS \rangle_G ~ \vert ~  \bar \cS \in [\Pi_n(G)] \}$$ form a basis of $B_n(G)$
and a  $(1,G)$-simplicial Burnside  can also be seen as the lattice Burnside ring of some lattice introduced by Oda, Takegahara and Yoshida in \cite{OTY}.

\end{itemize}
\end{Rema}

We have the following corollary,
\begin{Coro}
The map $$ \QQ \Phi^G_n: \QQ B_n(G) \ra \prod_{\bar \cS \in [\Pi_n(G)]} \QQ,$$
where $\QQ \Phi^G_n$ is the rational extension of $\Phi^G_n$, is an isomorphism of $ \QQ$-algebras.
\end{Coro}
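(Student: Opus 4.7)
The plan is to derive the corollary as an immediate consequence of Proposition~\ref{TheoBurn}. First I would recall from part~(2) of that proposition two facts: the map $\Phi^G_n : B_n(G) \to C_n(G) = \prod_{\bar \cS \in [\Pi_n(G)]} \ZZ$ is an injective ring homomorphism, and its cokernel, as an abelian group, is finite. The claim then reduces to a standard exactness argument.

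Since $\QQ$ is flat over $\ZZ$, tensoring the short exact sequence
\[
0 \longrightarrow B_n(G) \xrightarrow{\Phi^G_n} C_n(G) \longrightarrow \mathrm{Coker}(\Phi^G_n) \longrightarrow 0
\]
with $\QQ$ preserves exactness. Injectivity of $\Phi^G_n$ thus yields injectivity of $\QQ\Phi^G_n$, while the finiteness of $\mathrm{Coker}(\Phi^G_n)$ means it is a torsion $\ZZ$-module and hence $\QQ \otimes_\ZZ \mathrm{Coker}(\Phi^G_n) = 0$, which gives surjectivity of $\QQ\Phi^G_n$.

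As a redundant sanity check one may verify the dimension count: by part~(2) of Proposition~\ref{TheoBurn}, the family $\{\langle \bar \cS \rangle_G \mid \bar \cS \in [\Pi_n(G)]\}$ is a $\ZZ$-basis of $B_n(G)$, so $\dim_\QQ \QQ B_n(G) = |[\Pi_n(G)]| = \dim_\QQ \prod_{\bar \cS \in [\Pi_n(G)]} \QQ$, and any injective $\QQ$-linear map between finite-dimensional $\QQ$-spaces of the same dimension is an isomorphism. Finally, since $\Phi^G_n$ is a ring homomorphism, so is its $\QQ$-linear extension, so the map is in fact an isomorphism of $\QQ$-algebras.

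There is no serious obstacle here; the only point requiring any care is the justification that the finiteness of the cokernel of $\Phi^G_n$ over $\ZZ$ really implies surjectivity after $- \otimes_\ZZ \QQ$, and this is exactly the flatness of $\QQ$ over $\ZZ$ (equivalently, any finite abelian group is killed by inverting the integers). All the substantive work has already been done in Proposition~\ref{TheoBurn}.
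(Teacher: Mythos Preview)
Your argument is correct and is exactly the standard justification the paper has in mind: the corollary is stated there without proof, as an immediate consequence of Proposition~\ref{TheoBurn}~(2), and your flatness/dimension-count argument simply spells out why injectivity plus finite cokernel over $\ZZ$ becomes a bijection after tensoring with $\QQ$. There is nothing to add or correct.
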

Note that every $\QQ$-algebra homomorphism $\QQ B_n (G) \ra \QQ$ is of the form $\QQ \phi_{\bar \cS}$ for some $\bar \cS \in \Pi_n(G)$. For $\bar \cS , \bar \cT  \in \Pi_n(G)$, we have  $\QQ \phi_{\bar \cS}= \QQ \phi_{\bar \cT}$ if and only $\bar \cS=_G \bar \cT$.\\
More generally,
\begin{Nota} Put $W_G(\bar \cS)= N_G(\bar \cS)/S_0$.
Let $p$ be a prime, and $\infty$ be just a symbol. For each $\ZZ$-module $M$, we shall set $M_{(p)} = \ZZ_{(p)} \otimes_\ZZ M$, where $ \ZZ_{(p)}$ is the localisation of $\ZZ$ at $p$, and $M_{(\infty)}=M$. For a $n$-slice $\bar \cS$, we denote $W_G(\bar \cS)_p$ a Sylow $p$-subgroup of $W_G(\bar \cS)$, and set $W_G(\bar \cS)_{\infty} =W_G(\bar \cS)$. Let $ \big( \Phi_{n}^G \big)_p$ or simply $\Phi_{(p)}^G$ (if there is no risk of confusion)   denote the  homomorphism of $ \ZZ_{(p)}$-modules from ${B_n(G)}_{(p)}$  to ${C_n(G)}_{(p)}$ induced by $\Phi_n^G$. 
\end{Nota}

\begin{Prop}
  Then 
\begin{enumerate}
\item We consider     ${B_n(G)}_{(p)}$ and ${C_n(G)}_{(p)}$ as  subrings of   $\prod_{\bar \cS \in [\Pi_n(G)]} \QQ$.

The set $J' := \big\{ \frac{1}{\phi_{\bar \cS} (\bar \cS)}\bar \cS:= \big(\frac{\phi_{\bar \cT} (\bar \cS)}{\phi_{\bar \cS} (\bar \cS)} \big)_{\bar \cT \in [\Pi_n(G)]} ~\vert~  \bar \cS \in [\Pi_n(G)] \big\}$ is a basis of $C_n(G)$.
\item If we define $Obs(G)$ as $ \underset{\bar \cS \in [\Pi_n(G)]} {\bigoplus} \ZZ \big/\vert W_G(\bar \cS)  \vert\ZZ$, then $$C_n(G)_{(p)}/ \Im \Phi^G_{(p)} \cong Obs(G)_{(p)}$$
\item Define  a homomorphism of $ \ZZ_{(p)}$-modules           $\Psi_{(p)}^G: C_n(G)_{(p)} \ra   Obs(G)_{(p)}$ by 

$$ (x_{\bar \cS})_{\bar \cS \in [\Pi_n(G)]} \mapsto \big( \sum_{gT_0 \in W_G(\bar \cT)_{(p)}}  x_{<g>\bar \cT} ~~\mod ~ \vert W_G(\bar \cT)_{(p)} \vert   \big)_{\bar \cT \in [\Pi_n(G)]}$$ Then $\Psi_{(p)}^G$ is surjective.
\item  The sequence 
$$\xymatrix{ 0 \ar[r]&  {B_n(G)}_{(p)}  \ar[r]^{ \Phi_{(p)}^G} &{C_n(G)}_{(p)} \ar[r]^-{\Psi_{(p)}^G} &  Obs(G)_{(p)} \ar[r]&0 }$$ of $ \ZZ_{(p)}$-modules is exact.

\end{enumerate}
\end{Prop}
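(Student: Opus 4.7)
My approach follows Dress's original argument for the Burnside ring $B(G)=B_0(G)$, suitably extended to $n$-slices, and I would prove the four parts in the order $(1)\Rightarrow(3)\Rightarrow(4)\Rightarrow(2)$. The key tools are the integrality and triangularity statements of Corollary~\ref{conj}, the injectivity of $\Phi^G_n$ from Proposition~\ref{TheoBurn}(2), and the explicit iterated fixed-point formula for $\phi_{\bar\cS}$.

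For (1), note that $\phi_{\bar\cS}(\bar\cS)=|W_G(\bar\cS)|$, since $\bar\cS^g\preceq\bar\cS$ inside a finite group forces $g\in N_G(\bar\cS)$; by Corollary~\ref{conj}(5) each ratio $\phi_{\bar\cT}(\bar\cS)/\phi_{\bar\cS}(\bar\cS)$ is an integer, and by Corollary~\ref{conj}(4) it vanishes unless $\bar\cT\preceq_G\bar\cS$. Ordering $[\Pi_n(G)]$ linearly so as to refine $\preceq_G$, the transition matrix from $J'$ to the standard basis of $C_n(G)$ is therefore unitriangular over $\ZZ$, so $J'$ is a $\ZZ$-basis; as a byproduct $|\det\Phi^G_n|=\prod_{\bar\cS}|W_G(\bar\cS)|$, whence after $p$-localization
\[[C_n(G)_{(p)}:\Im\Phi^G_{(p)}]=\prod_{\bar\cS\in[\Pi_n(G)]}|W_G(\bar\cS)|_{(p)}=|Obs(G)_{(p)}|.\]

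For (3), I would evaluate $\Psi^G_{(p)}$ on the standard basis $(e_{\bar\cT})$ of $C_n(G)_{(p)}$: the $\bar\cT'$-coordinate of $\Psi^G_{(p)}(e_{\bar\cT})$ counts cosets $gT'_0\in W_G(\bar\cT')_{(p)}$ with $\langle g\rangle\bar\cT'=_G\bar\cT$. Since $\bar\cT'\preceq\langle g\rangle\bar\cT'$, this vanishes unless $\bar\cT'\preceq_G\bar\cT$; and for $\bar\cT'=\bar\cT$ a size comparison in the equation $\bar\cT^h=\langle g\rangle\bar\cT$ forces $\langle g\rangle\subseteq T_0\cap\cdots\cap T_n=T_0$, leaving only the identity coset and yielding diagonal value $1$. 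Hence the family $(\Psi^G_{(p)}(e_{\bar\cT}))_{\bar\cT}$ is unitriangular in $Obs(G)_{(p)}$, so $\Psi^G_{(p)}$ is surjective.

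For $\Im\Phi^G_{(p)}\subseteq\Ker\Psi^G_{(p)}$ in (4), I choose for each $gT_0\in W_G(\bar\cT)_{(p)}$ a $p$-element lift in $N_G(\bar\cT)$ (take the $p$-part of any preimage), so that $\langle g\rangle$ is a $p$-subgroup of $N_G(\bar\cT)$. Unwinding the iterated fixed-point formula then gives
\[\phi_{\langle g\rangle\bar\cT}(\cX^f_n)=\bigl|\Inv_{\bar\cT}(\cX^f_n)^{\langle g\rangle}\bigr|,\]
because $\langle g\rangle T_k$-fixity at level $k$ is equivalent to simultaneous $T_k$- and $g$-fixity, and $G$-equivariance of $f_k$ propagates $g$-fixity up the tower. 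Since $T_0$ acts trivially on $\Inv_{\bar\cT}(\cX^f_n)\subseteq X_0^{T_0}$, the $\langle g\rangle$-action depends only on the coset $gT_0$, so $W_G(\bar\cT)_{(p)}$ genuinely acts on $\Inv_{\bar\cT}(\cX^f_n)$. Orbit decomposition yields
\[\sum_{gT_0\in W_G(\bar\cT)_{(p)}}\phi_{\langle g\rangle\bar\cT}(\cX^f_n)=\sum_{x\in\Inv_{\bar\cT}(\cX^f_n)}\bigl|\mathrm{Stab}_{W_G(\bar\cT)_{(p)}}(x)\bigr|=|W_G(\bar\cT)_{(p)}|\cdot N,\]
where $N$ is the number of $W_G(\bar\cT)_{(p)}$-orbits, so the left-hand side vanishes modulo $|W_G(\bar\cT)_{(p)}|$. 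Combining the injectivity of $\Phi^G_{(p)}$ from Proposition~\ref{TheoBurn}(2), the surjectivity of $\Psi^G_{(p)}$ from (3), and the index equality of (1), the induced surjection $C_n(G)_{(p)}/\Im\Phi^G_{(p)}\twoheadrightarrow Obs(G)_{(p)}$ between finite $\ZZ_{(p)}$-modules of equal order is forced to be an isomorphism, yielding (2) and closing the exactness in (4) simultaneously. The most delicate step will be the identification $\phi_{\langle g\rangle\bar\cT}(\cX^f_n)=|\Inv_{\bar\cT}(\cX^f_n)^{\langle g\rangle}|$, which requires careful bookkeeping to verify that $g$-invariance propagates cleanly through the nested preimage/fixed-point construction of the iterated formula for $\phi$.
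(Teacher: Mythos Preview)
Your proposal is correct and follows essentially the same strategy as the paper: unitriangularity of the basis change for (1), a triangularity argument for the surjectivity in (3), and Cauchy--Frobenius orbit counting for $\Im\Phi^G_{(p)}\subseteq\Ker\Psi^G_{(p)}$ in (4), closing the sequence by comparing the cokernel of $\Phi^G_{(p)}$ with $Obs(G)_{(p)}$. The only notable difference is organizational: the paper proves (2) directly from (1) by reading off the quotient from the basis $J'$, and then invokes (2) to finish (4); you instead extract only the index from (1) and recover (2) simultaneously with the end of (4) via the equal-order surjection argument---but the underlying computation is the same.
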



\begin{proof}

\begin{enumerate}
\item  By Lemma~\ref{conj} ~$4)$, we have that $\phi_{\bar \cS} (\bar \cS)$ divides $\phi_{\bar \cT} (\bar \cS)$, and so, $\big(\frac{\phi_{\bar \cT} (\bar \cS)}{\phi_{\bar \cS} (\bar \cS)} \big)_{\bar \cT \in [\Pi_n(G)]}$ is an element of ${C_n(G)}_{(p)}$, for any $ \bar \cS \in [\Pi_n(G)]$. Now, compare the set $J'$ with the canonical basis
$$J:= \big\{ i_{\cS}= \big( \delta(\bar \cS ,\bar \cT  ) \big)_{\bar \cT \in [\Pi_n]} ~~\vert~~  \bar \cS \in [\Pi_n]  \big\}$$  of ${C_n(G)}_{(p)}$ where $\delta$ is the Kronecker's symbol, i.e.        
$ \delta(\bar \cS ,\bar \cT  )=  \left\{
\begin{array}{ccc}
  1& \mbox{for}  &  \bar \cS =\bar \cT \\
0&  \mbox{for}   & \bar \cS \neq \bar \cT 
\end{array}
\right.$\\
  Since $\vert J\vert=\vert J' \vert=\vert \Pi_n(G)\vert$, it suffices to prove that each $ i_{\bar \cS}$ can be written as an integral  combination of the  $\frac{1}{\phi_{\bar \cS} (\bar \cS)}\bar \cS$. This is done by induction with respect to $\preceq$.
  If $\bar \cS= (1, \dots, 1)$ then $\frac{\phi_{\bar \cT} (\bar \cS)}{\phi_{\bar \cS} (\bar \cS)}= \delta(\bar \cS ,\bar \cT  )$, for any $\bar \cT$, and so, $ i_{\bar \cS} \in J'$. For arbitrary $\bar \cS$, we have $\frac{\phi_{\bar \cS} (\bar \cS)}{\phi_{\bar \cS} (\bar \cS)}=1$ and $\frac{\phi_{\bar \cT} (\bar \cS)}{\phi_{\bar \cS} (\bar \cS)}=0$ for $\bar \cT \npreceq \bar \cS$, and so, $\frac{1}{\phi_{\bar \cS} (\bar \cS)}\bar \cS=  i_{\bar \cS} + \sum\limits_{\substack{\bar \cT \in [\Pi_n(G)]\\ \bar \cT \prec \bar \cS }} n_{\bar \cT,\bar \cS}  i_{\bar \cT}$ with $ n_{\bar \cT,\bar \cS} \in \ZZ$.\\
  Now by induction hypothesis, any $i_{\bar \cT} $ with $\bar \cT \prec \bar \cS$ is an integral linear combination of the elements of $J'$. So, the same is true for $ i_{\bar \cS}.$
  
  \item By Proposition~\ref{TheoBurn}~2), we have $\Im  \Phi_{(p)}^G = \bigoplus_{\bar \cS  \in [\Pi_n(G)] } \Phi_{(p)}^G(\langle \bar \cS \rangle_G ) $ and by Assertion 1) $${C_n(G)}_{(p)}=  \bigoplus_{\bar \cS  \in [\Pi_n(G)] } \frac{1}{\vert W_G(\bar \cS)_p \vert }  \Phi_{(p)}^G(\langle \bar \cS \rangle_G) \ZZ.$$ Hence $C_n(G)_{(p)}/ \Im \Phi^G_{(p)} \cong Obs(G)_{(p)}$.

\item Let $\bar i_{\bar \cS}= \big( \delta(\bar \cS ,\bar \cT  ) \mod \vert W_G(\bar \cS)_p \vert \big)_{\bar \cT \in [\Pi_n]}$. Obviously, the elements $\bar i_{\bar \cS}$  for $\bar \cS \in [\Pi_n]$ form a $\ZZ_{(p)}$-basis of $Obs(G)_{(p)}$. Now set 
$$R_0=\{  \bar \cS \in [\Pi_n] ~\vert~  \bar i_{\bar \cS} \notin \Im \Psi_{(p)}^G    \}$$
Suppose that $R_0 \neq \emptyset$, and let $\bar \cS$ be a minimal element of $R_0$ with respect to $\preceq_G$. Then no element $\bar \cT$ of $R_0- \{ \bar \cS\}$ satisfies $\bar \cT \preceq_G \bar \cS$, and so $\Psi_{(p)}^G  ((i_{\bar \cT})_{\bar \cT \in [\Pi_n] })=(y_{\bar \cT})_{\bar \cT \in [\Pi_n] } $, where 
 \[
y_{\bar \cT}=  \left\{
\begin{array}{ccc}
  1 ~\mod ~ \vert W_G(\bar \cS)_p \vert  & \mbox{for}  &  \bar \cT =\bar \cS \\
0~ \mod~  \vert W_G(\bar \cS)_p \vert &  \mbox{for}   &  \bar \cT  \in R_0- \{ \bar \cS\}
\end{array}
\right.\\
\]
But, $\bar i_{\bar \cT} \in \Im \Psi_{(p)}^G $ for any $\bar \cT \notin R_0$, which yields $\bar i_{\bar \cS} \in \Im \Psi^G_n$. This is a contradiction. Consequently, we have $R_0 =\emptyset$, and so $\Psi^G_{(p)}$ is surjective.
\item Let $\bar \cS \in [\Pi_n(G)]$. Then $$ \Psi_{(p)}^G( \Phi_{(p)}^G (\langle \bar \cS \rangle_G))= \big( \sum_{rT_0 \in W_G(\bar \cT)_{(p)}}  \vert \Inv_{<r>\bar \cT}(\bar \cS)\vert  ~~\mod ~ \vert W_G(\bar \cT)_{(p)} \vert   \big)_{\bar \cT \in [\Pi_n(G)]}$$
where $ \Inv_{<r>\bar \cT}(\bar \cS) = \{gS_0 \in G/S_0~\vert~ <r> \bar \cT \preceq {}^g\bar \cS \} := I_{<r>\bar \cT}$. Set $W= W_G(\bar \cS)_p $. Then $\Inv_{\bar \cT}(\bar \cS) $  can be view as a left $W$-set by the action given by $rT_0. gS_0=rgS_0$. With this action, one has
$ \Inv_{<r>\bar \cT}(\bar \cS) = \{gS_0 \in \Inv_{\bar \cT}(\bar \cS)~\vert~ rT_0 \cdot gS_0=gS_0 \}$. So
\begin{eqnarray*}
\sum_{rT_0 \in W}  \vert \Inv_{<r>\bar \cT}(\bar \cS)\vert&=& \sum_{gS_0 \in I_{\bar \cT}} \{rT_0 \in W~\vert~  rT_0 \cdot gS_0=gS_0 \}\\
&=& \sum_{gS_0 \in I_{\bar \cT}} \vert W_{gS_0}\vert \\
&=&  \sum_{gS_0 \in [W \backslash I_{\bar \cT}]} \vert O_{gK_0}\vert \vert W_{gS_0}\vert\\
&~~\equiv& 0 ~\mod ~\vert W \vert,
\end{eqnarray*}
and so $\Im  \Phi_{(p)}^G\subseteq \Ker \Psi_{(p)}^G$. It remains to prove that $  \Ker \Psi_{(p)}^G \subseteq \Im  \Phi_{(p)}^G$. Now, since $\Psi_{(p)}^G$ is surjective and $\Psi_{(p)}^G \circ \Phi_{(p)}^G =0$, we have that $\Psi_{(p)}^G$ factorizes through the cokernel of $\Phi_{(p)}^G$, which is isomorphic by $Obs_{(p)}$ by $2)$. So, we obtain a surjective map $\Coker \Phi_{(p)}^G \ra Obs_{(p)}$ between two isomorphic groups. This map is then an isomorphism, and so $\Ker \Psi_{(p)}^G $ is equal to $\Im  \Phi_{(p)}^G$.

\end{enumerate}

\end{proof}

\section{Idempotents elements}
By the isomorphism $\QQ \Phi$, there is an element $e^G_{\bar \cT} \in \QQ B_n(G)$ for each $n$-slice  $\bar \cT$ of $G$ such that  

$$\QQ\phi_{\bar \cS}^G (e^G_{\bar \cT} )= \left\{ \begin{array}{rcl}
1& \mbox{if}
& \bar \cS=_G \bar \cT \\ 0 & \mbox{otherwise} & 
\end{array}\right.$$
Clearly, the set $\{e^G_{\bar \cT}  ~ \vert ~  \bar \cT \in [\Pi_n(G)] \}$ is the set of primitive idempotents of $\QQ B_n(G)$.\par
In order to give the explicit formula of the primitive idempotent $e^G_{\bar \cT}$, we need  the Möbius function of  a  finite poset $(\cX, \le)$.
The Möbius function $\mu_\cX: \cX \times \cX \ra \ZZ$ of a finite poset is defined  inductively as follows (see \cite{Stanley}):
$$\mu_\cX(x,x)=1, ~~~ \mu_\cX(x,y)=0 ~\text{if}~ x \nleq y, ~~ \sum\limits_{\substack{t \le y}} \mu_\cX(x,t)=0  ~\text{if}~ x < y.$$ 

\begin{Prop}
Let $ \bar \cS$ be a $n$-slice of $G$. Then  the explicit formula of the primitive idempotent $e^G_{ \bar \cS}$ is given by
$$e^G_{ \bar \cS}= \frac{1}{\vert N_G(\bar \cS)\vert} \sum_{\bar \cT \preceq \bar \cS} \vert {\cT}_0 \vert \mu_{\Pi} \big( \bar \cT, \bar \cS \big)\langle \bar \cT \rangle_G,$$
where  $\mu_{\Pi}$ is the Möbius function of the poset $(\Pi_n(G), \preceq)$.
\end{Prop}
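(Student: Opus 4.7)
The plan is to use the ghost map characterization of primitive idempotents. By the previous corollary, $\QQ\Phi^G_n$ is an algebra isomorphism, so the primitive idempotents of $\QQ B_n(G)$ are uniquely determined by their images under the $\QQ\phi_{\bar \cU}$ for $\bar \cU \in [\Pi_n(G)]$. Writing $f^G_{\bar \cS}$ for the right-hand side of the claimed formula, it therefore suffices to verify that
$$\phi_{\bar \cU}\bigl(f^G_{\bar \cS}\bigr) = \delta(\bar \cU =_G \bar \cS)$$
for every $n$-slice $\bar \cU$ of $G$.

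I would then plug in the formula from the proposition computing $\phi_{\bar \cU}(\langle \bar \cT \rangle_G) = \vert\{g\cT_0 \in G/\cT_0 \mid \bar \cU^g \preceq \bar \cT\}\vert$. Because the condition $\bar \cU^g \preceq \bar \cT$ depends only on the coset $g\cT_0$ (the subgroups $\cT_i$ contain $\cT_0$, so $g\cT_i$ and the conjugate $\bar \cU^g$ are unchanged when $g$ is replaced by $gt$ with $t \in \cT_0$), multiplying by $|\cT_0|$ gives
$$|\cT_0| \cdot \phi_{\bar \cU}(\langle \bar \cT \rangle_G) = \bigl|\{g \in G \mid \bar \cU^g \preceq \bar \cT\}\bigr|.$$
Substituting into the definition of $f^G_{\bar \cS}$ and exchanging the order of summation yields
$$\phi_{\bar \cU}\bigl(f^G_{\bar \cS}\bigr) = \frac{1}{|N_G(\bar \cS)|} \sum_{g \in G} \Bigl( \sum_{\bar \cU^g \preceq \bar \cT \preceq \bar \cS} \mu_{\Pi}(\bar \cT, \bar \cS) \Bigr).$$

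The heart of the argument is then Möbius inversion in the finite poset $(\Pi_n(G), \preceq)$: by the standard identity in the incidence algebra, the inner sum equals $\delta_{\bar \cU^g,\, \bar \cS}$ (interpreting it as $0$ whenever $\bar \cU^g \npreceq \bar \cS$). Hence
$$\phi_{\bar \cU}\bigl(f^G_{\bar \cS}\bigr) = \frac{\bigl|\{g \in G \mid \bar \cU^g = \bar \cS\}\bigr|}{|N_G(\bar \cS)|}.$$
If $\bar \cU$ is not $G$-conjugate to $\bar \cS$ this numerator is $0$; otherwise, after identifying the chosen representatives $\bar \cU = \bar \cS$ the numerator is $|N_G(\bar \cS)| = \bigl|\bigcap_i N_G(S_i)\bigr|$, and the ratio is $1$. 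This gives $\phi_{\bar \cU}(f^G_{\bar \cS}) = \delta(\bar \cU =_G \bar \cS)$, so $f^G_{\bar \cS} = e^G_{\bar \cS}$.

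The main obstacle is purely conceptual rather than computational: one needs to be sure that the dual Möbius identity $\sum_{x \preceq t \preceq y} \mu(t,y) = \delta_{x,y}$ is applied to the correct interval in $\Pi_n(G)$, using that conjugation $\bar \cU \mapsto \bar \cU^g$ preserves the ordering $\preceq$ so that $\bar \cU^g$ really is an element of the poset. Once this is in place, the rest is bookkeeping with the definitions of $\preceq$ and of $N_G(\bar \cS)$.
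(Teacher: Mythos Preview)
Your proof is correct and follows essentially the same route as the paper's: apply each $\phi_{\bar \cU}$ to the proposed expression, convert the coset count into a sum over $g\in G$, swap the order of summation, and invoke the M\"obius identity $\sum_{x\preceq t\preceq y}\mu(t,y)=\delta_{x,y}$ on the interval $[\bar \cU^g,\bar \cS]$ to reduce to the count $|\{g\in G\mid \bar \cU^g=\bar \cS\}|/|N_G(\bar \cS)|$. The paper carries out the same computation (phrasing the condition as $\bar \cK\preceq {}^g\bar \cT$ rather than $\bar \cU^g\preceq \bar \cT$), so there is no substantive difference.
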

\begin{proof}

Let $\bar \cK$ be any $n$-slice of $G$. Then 
\begin{eqnarray*}
\QQ\phi^G_{\bar \cK}( \vert G \vert e^G_{\bar \cS}) &=& \frac{\vert G \vert}{\vert N_G(\bar \cS)\vert}\! \sum_{\bar \cT \preceq \bar \cS} \! \vert \bar \cT_0 \vert \mu_{\Pi} \big( \bar \cT, \bar \cS \big) \QQ\phi^G_{\bar \cK} (\langle \bar \cT \rangle_G)\\
&=&  \frac{\vert G \vert}{\vert N_G(\bar \cS)\vert}\! \sum_{\bar \cT \preceq \bar \cS} \! \vert \bar \cT_0 \vert \mu_{\Pi} \big( \bar \cT, \bar \cS \big) \vert \{ g \in G/\bar \cT_0~\vert ~ \bar \cK \preceq {}^g\bar \cT\} \vert\\
&=&\frac{\vert G \vert}{\vert N_G(\bar \cS)\vert}\! \sum_{g\in G} \sum\limits_{\substack{\bar \cK \preceq {}^g\bar \cT \preceq {}^g\bar \cS}}\! \mu_{\Pi} \big( \bar \cT, \bar \cS \big)\\
&=&\vert G \vert \delta(\bar \cS,  \bar \cT)
\end{eqnarray*}
where the second equality follows from Lemma~\ref{conj}.
By the property of the Möbius function, we have that the sum $\sum\limits_{\substack{\bar \cK \preceq {}^g\bar \cT \preceq {}^g \bar \cS}}\! \mu_{\Pi} \big( \bar \cT, \bar \cS \big)$ is zero unless $\bar \cK={}^g\bar \cS$, for any $g \in G$. Therefore,
 $$\phi_{\bar \cK}^G (e^G_{\bar \cS})= \left\{ \begin{array}{rcl}
1& \mbox{if}
&\bar \cK=_G \bar \cS \\ 0 & \mbox{otherwise} & 
\end{array}\right.$$
\end{proof}

In the case $n=0$ the formula of the primitive idempotent was given by Gluck in \cite{Gluck} and independently by Yoshida in \cite{Yoshida}. The formula in the case $n=1$ was stated by Bouc in \cite{S.Bouc1}. The idempotent $e^G_{\bar \cS}$ is the only idempotent of $\QQ B_n(G)$  with the following property:  

\begin{Prop} [Characterization of $e^G_{\bar \cS}$]
\item  Let $\bar \cS$ be  a $n$-slice of $G$. Then  $\mathcal X . e^G_{\bar \cS}=  \QQ\phi_{\bar \cS}^G(\mathcal X)  e^G_{\bar \cS},$ for any $\cX \in \QQ B_n(G)$.\\
Conversely, if $\mathcal Y \in \QQ B_n(G)$ is such that $\mathcal X \cdot \mathcal Y=  \QQ\phi_{\bar \cS}^G(\mathcal X) \mathcal Y,$
then $\mathcal Y \in \QQ e^G_{\bar \cS}$ (that is $\mathcal Y $ is a rational multiple of $ e^G_{\bar \cS}$).

\end{Prop}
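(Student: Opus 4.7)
The plan is to leverage the isomorphism $\QQ \Phi^G_n : \QQ B_n(G) \xrightarrow{\sim} \prod_{\bar \cS \in [\Pi_n(G)]} \QQ$ established in the previous corollary, which reduces both assertions to componentwise verifications on the right-hand product of copies of $\QQ$. Under this isomorphism the primitive idempotent $e^G_{\bar \cS}$ corresponds to the tuple supported at the coordinate $\bar \cS$ and zero elsewhere, essentially by its defining property.

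For the first assertion, it suffices to check that $\QQ\phi_{\bar \cT}^G(\mathcal X \cdot e^G_{\bar \cS})$ and $\QQ\phi_{\bar \cT}^G\big(\QQ\phi_{\bar \cS}^G(\mathcal X)\, e^G_{\bar \cS}\big)$ agree for every $\bar \cT \in [\Pi_n(G)]$, since $\QQ \Phi^G_n$ is injective. Using that each $\QQ \phi_{\bar \cT}^G$ is a ring homomorphism and that $\QQ \phi_{\bar \cT}^G(e^G_{\bar \cS}) = \delta(\bar \cS , \bar \cT)$, both sides collapse to $\QQ\phi_{\bar \cS}^G(\mathcal X)$ when $\bar \cT =_G \bar \cS$ and to $0$ otherwise. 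Hence the desired identity holds in $\QQ B_n(G)$.

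For the converse, expand $\mathcal Y$ in the $\QQ$-basis of primitive idempotents of $\QQ B_n(G)$, say $\mathcal Y = \sum_{\bar \cT \in [\Pi_n(G)]} \lambda_{\bar \cT}\, e^G_{\bar \cT}$ with $\lambda_{\bar \cT} \in \QQ$. Applying the hypothesis $\mathcal X \cdot \mathcal Y = \QQ\phi_{\bar \cS}^G(\mathcal X)\,\mathcal Y$ together with the first assertion yields, for every $\mathcal X \in \QQ B_n(G)$,
\begin{equation*}
\sum_{\bar \cT \in [\Pi_n(G)]} \lambda_{\bar \cT}\bigl(\QQ\phi_{\bar \cT}^G(\mathcal X) - \QQ\phi_{\bar \cS}^G(\mathcal X)\bigr) e^G_{\bar \cT} = 0.
\end{equation*}
Since the $e^G_{\bar \cT}$ are $\QQ$-linearly independent, the coefficient $\lambda_{\bar \cT}\bigl(\QQ\phi_{\bar \cT}^G(\mathcal X) - \QQ\phi_{\bar \cS}^G(\mathcal X)\bigr)$ must vanish for every $\bar \cT$ and every $\mathcal X$. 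For $\bar \cT \neq_G \bar \cS$, the two $\QQ$-algebra morphisms $\QQ\phi_{\bar \cT}^G$ and $\QQ\phi_{\bar \cS}^G$ are distinct (one can take $\mathcal X = e^G_{\bar \cS}$ to see this), so $\lambda_{\bar \cT} = 0$. Thus $\mathcal Y = \lambda_{\bar \cS}\, e^G_{\bar \cS}$, as claimed.

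The only subtle point is making sure the distinctness of the homomorphisms $\QQ\phi_{\bar \cT}^G$ is genuinely used; this is precisely the content of the injectivity half of the analogue of Burnside's theorem proved earlier, so no new work is needed.
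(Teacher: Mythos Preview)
Your proof is correct and follows essentially the same approach as the paper's: both arguments exploit that the $e^G_{\bar \cT}$ form an orthogonal $\QQ$-basis of $\QQ B_n(G)$ corresponding to the coordinate idempotents under $\QQ\Phi^G_n$. The only cosmetic difference is that for the first part the paper expands $\mathcal X$ in the idempotent basis while you verify the identity componentwise via the ghost map, and for the converse the paper plugs in $\mathcal X = e^G_{\bar \cT}$ directly rather than first expanding $\mathcal Y$ and then specializing; the content is the same.
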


\begin{proof}
\item Note that a $\QQ$-basis of $\QQ B_n(G)$ is given by the $e^G_{\bar \cT}$ where $\bar \cT$ runs through the set $\Pi_n(G)$ of conjugacy classes of $n$-slices of $G$. So for any $\mathcal X \in \QQ B_n(G)$, we have that 
$$\cX= \sum_{\bar \cT \in [\Pi_n(G)]} \lambda_{\bar \cT} e^G_{\bar \cT},$$
where $\lambda_{\bar \cT}$ are rational numbers.
Since the  elements $e^G_{\bar \cT}$ are orthogonal, it follows that for any $n$-slice  $\bar \cS \in [\Pi_n(G)]$ we have,
$$\phi_{\bar \cS}^G(\mathcal X)= \lambda_{\bar \cS}~~ \text{and} ~~\cX.e^G_{\bar \cS}= \phi_{\bar \cS}^G(\mathcal X)e^G_{\bar \cS}.$$
Conversely, let $\cY$ be an element of $\QQ B_n(G)$ verifying $\cX . \cY = \phi_{\bar \cS}^G(\mathcal X) \mathcal Y$ for any $\cX \in  \QQ B_n(G)$.
Then  in particular $e^G_{\bar \cT}. \cY=0$ if $\bar \cS \neq_G \bar \cT$, thus $\cY= \phi^G_{\bar \cS} (\cY) e^G_{\bar \cS}$  is a rational multiple of $e^G_{\bar \cS}$.


\end{proof}

\begin{Prop} Let $(\cX, \le )$ be a finite poset. Let $\Pi_n(\cX)$ denote the set of $n$-tuples $(x_0, \dots, x_n)$ of elements of $\cX$ such that $x_0 \le \dots \le x_n$. Define a partial order $\preceq$ on $\Pi_n(\cX)$ by 
$$ (x_0, \dots, x_n) \preceq (y_0, \dots, y_n) \Leftrightarrow x_i \le y_i, \forall i=0, \dots, n$$
Then the Möbius function $\mu_\Pi$ of the poset $(\Pi_n(\cX), \preceq)$ can be computed as follows, for any $\bar x:=(x_0, \dots, x_n) , \bar y:=(y_0, \dots, y_n)  \in \Pi_n(\cX)$:

\begin{equation}\label{cinq}
\mu_{\Pi} \big( \bar x , \bar y \big)= \left\{ \begin{array}{rcl}
\prod\limits_{\substack{ i=0}}^n\mu_{\cX}(x_i,y_i)& \mbox{if}
&x_0 \le y_0 \le x_1 \le y_1 \dots \le x_n \le y_n \\ 0 & \mbox{otherwise} & 
\end{array}\right.
\end{equation}
where $\mu_\cX$ is the Möbius function of the poset $(\cX, \le)$.
\end{Prop}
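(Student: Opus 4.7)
The plan is to verify that the formula proposed for $\mu_\Pi(\bar x, \bar y)$, which I will call $\tilde\mu(\bar x, \bar y)$, satisfies the defining recursion of the Möbius function on $(\Pi_n(\cX), \preceq)$: namely $\sum_{\bar x \preceq \bar z \preceq \bar y} \tilde\mu(\bar x, \bar z) = \delta_{\bar x, \bar y}$ for all $\bar x \preceq \bar y$. The case $\bar x = \bar y$ is immediate, since the interleaving condition then reduces to $\bar x \in \Pi_n(\cX)$ and $\prod_i \mu_\cX(x_i, x_i) = 1$. Hence the real task is to show the sum vanishes whenever $\bar x \prec \bar y$.

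By the definition of $\tilde\mu$, only those $\bar z$ for which the interleaving $x_0 \le z_0 \le x_1 \le z_1 \le \dots \le x_n \le z_n$ holds contribute to the sum. Combined with $\bar z \preceq \bar y$, this restricts $\bar z$ to satisfy, coordinate by coordinate, $z_i \in [x_i, x_{i+1}]$ and $z_i \le y_i$ for each $i < n$, together with $z_n \in [x_n, y_n]$. The crucial observation is that the weakly-increasing constraint $z_i \le z_{i+1}$ coming from $\bar z \in \Pi_n(\cX)$ is now automatic, because $z_i \le x_{i+1} \le z_{i+1}$. Since the summand $\prod_i \mu_\cX(x_i, z_i)$ is multiplicative across coordinates, the whole sum factors as $\prod_{i=0}^{n-1} A_i \cdot A_n$, where $A_n = \sum_{z \in [x_n, y_n]} \mu_\cX(x_n, z) = \delta_{x_n, y_n}$ and, for $i < n$, $A_i = \sum_{z \in [x_i, x_{i+1}],\, z \le y_i} \mu_\cX(x_i, z)$.

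To close the argument, I would single out the largest index $j^\ast$ at which $x_{j^\ast} \ne y_{j^\ast}$ (well-defined since $\bar x \ne \bar y$). If $j^\ast = n$, then $A_n = 0$ and the product vanishes. Otherwise $x_{j^\ast+1} = y_{j^\ast+1}$ by maximality of $j^\ast$, which forces $y_{j^\ast} \le y_{j^\ast+1} = x_{j^\ast+1}$. Consequently the upper bound $z \le x_{j^\ast+1}$ is redundant in the range defining $A_{j^\ast}$, which collapses to $\sum_{z \in [x_{j^\ast}, y_{j^\ast}]} \mu_\cX(x_{j^\ast}, z) = \delta_{x_{j^\ast}, y_{j^\ast}} = 0$. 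In either case, the product is zero.

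The main obstacle is to notice that the interleaving condition built into $\tilde\mu$ is exactly what makes the weakly-increasing constraint redundant and lets the sum decouple across coordinates. Without this observation, one is tempted to attack the coupling $z_i \le z_{i+1}$ head on, which becomes delicate when $\cX$ is not a lattice, since the relevant ``meets'' $x_{i+1} \wedge y_i$ need not exist. Once the factorization is in place, the last-differing-coordinate trick handles the remaining case uniformly, with no additional hypothesis on $\cX$.
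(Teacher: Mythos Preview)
Your proof is correct and follows essentially the same approach as the paper: both exploit the key observation that the interleaving condition in $\tilde\mu$ forces $z_i \le x_{i+1} \le z_{i+1}$, so the chain constraint on $\bar z$ becomes redundant and the sum decouples coordinatewise. The paper presents this as an induction on $n$, peeling off the last coordinate (the factor $\sum_{x_n\le t_n\le y_n}\mu_\cX(x_n,t_n)=\delta(x_n,y_n)$) and then, when $x_n=y_n$, using $y_{n-1}\le y_n=x_n$ to absorb the leftover constraint $t_{n-1}\le x_n$ and reduce to $\cP_{n-1}$; your ``largest index $j^\ast$ with $x_{j^\ast}\ne y_{j^\ast}$'' argument is exactly the unrolled form of that induction.
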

\begin{proof}
Let $m\big( \bar x , \bar y \big)$ denote the expression  defined by the right hand side of (\ref{cinq}). Then if $\bar x \preceq \bar y$ 
\begin{eqnarray*}
\sum\limits_{\substack{ \bar t \in \Pi_n(\cX) \\  \bar x \preceq \bar t \preceq \bar y}} m \big(\bar x,  \bar t  \big)&=&  \sum\limits_{\substack{ (t_0, \dots, t_n) \in \cP_n}} \mu_\cX (x_0, t_0) \dots \mu_\cX (x_n, t_n)\\
\end{eqnarray*}
where $\cP_n := \{ \bar t \in \Pi_n(\cX) ~\vert~ \left\{ \begin{array}{rcl}
x_i \le t_i \le y_i& \mbox{for all $i=0, \dots, n$}\\
 x_0 \le \dots \le x_n & \\
   y_0 \le \dots \le y_n & \\
 t_i \le x_{i+1} & \mbox{for all $i=0, \dots, n-1$}
\end{array}\right.\}$.\\
So,
$$\sum\limits_{\substack{ (t_0, \dots, t_n) \in \cP_n}} \mu_\cX (x_0, t_0) \dots \mu_\cX (x_n, t_n) =$$$$ \big( \sum_{x_n \le t_n \le y_n} \mu_\cX (x_n, t_n)\big)\big( \sum_{(t_0, \dots, t_{n-1}) \in \cP'_{n-1}} \mu_\cX (x_0, t_0) \dots \mu_\cX (x_{n-1}, t_{n-1}) \big) ~~~~(\star)$$
where $\cP'_{n-1} := \{ (t_0, \dots, t_{n-1}) \in \Pi_{n-1}(\cX) ~\vert~ \left\{ \begin{array}{rcl}
x_i \le t_i \le y_i& \mbox{for all $i=0, \dots, {n-1}$}\\
 t_i \le x_{i+1} & \mbox{for all $i=0, \dots,  n-1$}
\end{array}\right.\}$.\\
The first factor of $(\star)$ is equal to $0$ if $x_n \neq y_n$, and $1$ if $x_n = y_n$. Hence, if $x_n = y_n$, then the second factor is equal to 
$$ \sum_{(t_{0}, \dots, t_{n-1}) \in \cP_{n-1}} \mu_\cX (x_0, t_0) \dots \mu_\cX (x_{n-1}, t_{n-1}) $$
Hence inductively, we have 
\begin{eqnarray*}
\sum\limits_{\substack{ \bar t \in \Pi_n(\cX) \\  \bar x \preceq \bar t \preceq \bar y}} m \big(\bar x,  \bar t \big)&=&  \prod\limits_{\substack{ i=0}}^n \delta(x_i, y_i)\\
\end{eqnarray*} 
and the proposition follows.
\end{proof}

\begin{Coro}
Let $ \bar \cS= (S_0, \dots, S_n)$ and $\bar \cT=(T_0, \dots, T_n)$ be $n$-slices of $G$. Then 

$$\mu_{\Pi} \big( \bar \cT, \bar \cS\big)= \left\{ \begin{array}{rcl}
\prod\limits_{\substack{ i=0}}^n\mu(T_i,S_i)& \mbox{if}
&T_0 \le S_0 \le T_1 \le S_1 \dots \le T_n \le S_n \\ 0 & \mbox{otherwise} & 
\end{array}\right.$$
where $\mu$ is the Möbius function of the poset of subgroups of $G$.\\
In particular  
$$e^G_{\bar \cS}= \frac{1}{\vert N_G(\bar \cS)\vert} \sum_{T_0 \le S_0 \le \dots \le T_n \le S_n} \vert T_0 \vert \mu  (T_0, S_0) \dots \mu( T_n, S_n)\langle \bar \cT \rangle_G,$$

\end{Coro}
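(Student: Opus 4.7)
The plan is to specialise the preceding proposition to the finite poset $\cX$ consisting of all subgroups of $G$ ordered by inclusion. With this choice, $\Pi_n(\cX)$ coincides set-theoretically with $\Pi_n(G)$, since an $n$-slice is by definition an $(n+1)$-tuple $(S_0,\dots,S_n)$ of subgroups with $S_0\le\dots\le S_n$, i.e.\ an element of $\Pi_n(\cX)$. Moreover the partial order $\preceq$ defined earlier on $\Pi_n(G)$ is literally the one described in the preceding proposition. Therefore the formula for $\mu_\Pi$ proved there translates into the displayed formula, where $\mu_\cX$ becomes the Möbius function $\mu$ of the subgroup lattice of $G$.

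For the second assertion, the strategy is simply to substitute this expression into the general formula
\[
e^G_{\bar \cS}= \frac{1}{\vert N_G(\bar \cS)\vert} \sum_{\bar \cT \preceq \bar \cS} \vert T_0 \vert\, \mu_{\Pi}(\bar \cT, \bar \cS)\,\langle \bar \cT \rangle_G
\]
established in the earlier proposition. By the formula just obtained, $\mu_{\Pi}(\bar\cT,\bar\cS)$ vanishes unless the alternating chain condition $T_0 \le S_0 \le T_1 \le S_1 \le \dots \le T_n \le S_n$ holds, and on this subset it factors as $\prod_{i=0}^n \mu(T_i,S_i)$. Hence the sum collapses to the one claimed in the corollary.

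I expect no genuine obstacle: the argument is a two-step specialisation followed by substitution. The only point deserving a sentence of care is the identification of the two partial orders $\preceq$ (on $\Pi_n(\cX)$ in the abstract proposition and on $\Pi_n(G)$ in the paper), which is immediate from the definitions, and the observation that in the formula for $e^G_{\bar\cS}$ the sum runs over all $n$-slices $\bar\cT$, not over conjugacy classes, so that the product formula for $\mu_\Pi$ applies without further adjustment.
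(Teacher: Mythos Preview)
Your proposal is correct and follows exactly the intended route: the paper states this result as an immediate corollary with no separate proof, the implicit argument being precisely the specialisation of the preceding proposition to $\cX=$ the subgroup lattice of $G$, followed by substitution into the earlier idempotent formula. Your remark that the sum over $\bar\cT\preceq\bar\cS$ is over genuine $n$-slices (not conjugacy classes) is a sensible point of care, and is consistent with how the idempotent formula was stated.
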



\section{Prime ideals}
The aim of this part is the proof that a finite group $G$ is solvable if and only if the prime ideal spectrum of $B_n(G)$ is connected, i.e., if and only if $0$ and $1$  are the only idempotents in $B_n(G)$, like established by A.Dress in \cite{Dress} for $B_0(G)$ and S.Bouc in~\cite{S.Bouc1}.
Note that $C_n(G)$ is integral over $B_n(G)$,  because it  is generated by idempotent elements which are integral over any subring.   Hence by the going-up theorem, every prime ideal of $B_n(G)$ comes from  $C_n(G)$.\\
\begin{Prop}\label{ideals}${}$
Let $p$ denote either $0$ or a prime number.
If $ \bar \cS \in \Pi_n(G)$, let $I_{\bar \cS,p}$ be the prime ideal of $B_n(G)$ defined as the kernel of the ring homomorphism
$$\xymatrix{ B_n(G) \ar[r]^-{\phi^G_{\bar \cS}} & \ZZ \!\ar[r]^-{p} & \ZZ/p\ZZ}.$$
Then every prime ideal $I$ of $B_n(G)$  has the form $I_{\bar \cS,p}$ for a suitable $\bar \cS \in \Pi_n(G)$.
Moreover,  given a prime ideal $I$ there exists a unique $\bar \cK \in [\Pi_n(G)]$ with $I= I_{\cK,p}$ and  $\phi_{\bar \cK}^G(\langle \bar \cK \rangle_G) \neq 0 ~(\mod ~p) $ where $p$ is the characteristic of the ring $R:= B_n(G)/I$.
\end{Prop}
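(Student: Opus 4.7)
The plan is to follow the template of Dress's classical argument for $B(G)$, leveraging Proposition~\ref{TheoBurn}(3). Starting from a prime ideal $I$ of $B_n(G)$, I would set $R := B_n(G)/I$ (an integral domain) and write $\phi : B_n(G) \twoheadrightarrow R$ for the canonical surjection. Applying Proposition~\ref{TheoBurn}(3) to $\phi$ produces a unique conjugacy class $\bar \cK \in [\Pi_n(G)]$ that is minimal in $T(\phi) = \{\bar \cS : \phi(\langle \bar \cS \rangle_G) \neq 0\}$, together with the factorisation $\phi(x) = \phi_{\bar \cK}(x)\cdot 1_R$ for all $x \in B_n(G)$. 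Hence $\phi$ decomposes as $\pi \circ \phi_{\bar \cK}$, where $\pi : \ZZ \to R$ is the unique ring morphism; surjectivity of $\phi$ combined with $\phi_{\bar \cK}(\textbf e_n) = 1$ forces surjectivity of $\pi$, so $R$ is a quotient of $\ZZ$ and its characteristic $p$ is either $0$ or a prime. One then reads off $I = \ker \phi = \phi_{\bar \cK}^{-1}(p\ZZ) = I_{\bar \cK, p}$, and the membership $\bar \cK \in T(\phi)$ translates exactly into $\phi_{\bar \cK}(\langle \bar \cK \rangle_G) \not\equiv 0 \pmod p$.

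For uniqueness, I would start from a second $\bar \cK' \in [\Pi_n(G)]$ satisfying both $I = I_{\bar \cK', p}$ and $\phi_{\bar \cK'}(\langle \bar \cK' \rangle_G) \not\equiv 0 \pmod p$. The two composites $\pi \circ \phi_{\bar \cK}$ and $\pi \circ \phi_{\bar \cK'}$ are then surjective ring morphisms $B_n(G) \to R$ with common kernel $I$; each descends to a ring automorphism of $R$, and since $\ZZ$ and $\ZZ/p\ZZ$ have only the trivial ring automorphism, both descents are the identity. Consequently the two composites coincide, yielding $\phi_{\bar \cK}(x) \equiv \phi_{\bar \cK'}(x) \pmod p$ for every $x \in B_n(G)$. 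Specialising to $x = \langle \bar \cK \rangle_G$ gives $\phi_{\bar \cK'}(\langle \bar \cK \rangle_G) \not\equiv 0 \pmod p$, hence $\neq 0$, and the fixed-point formula $\phi_{\bar \cS}((G/\bar \cT)_n) = |\{g \in G/\cT_0 : \bar \cS^g \preceq \bar \cT\}|$ from Section~2 then forces $\bar \cK' \leq_G \bar \cK$; exchanging the roles of $\bar \cK$ and $\bar \cK'$ gives the reverse inequality, and the usual order-cardinality argument on the $\cK_i$ upgrades this to $\bar \cK =_G \bar \cK'$.

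The main pitfall is that the equality of kernels $I_{\bar \cK, p} = I_{\bar \cK', p}$ alone is much weaker than the desired conjugacy: many non-minimal slices $\bar \cS$ produce the same prime ideal $I_{\bar \cS, p}$, so without the auxiliary condition $\phi_{\bar \cS}(\langle \bar \cS \rangle_G) \not\equiv 0 \pmod p$ one cannot hope for a unique $\bar \cK$. The entire uniqueness step hinges on converting this $\pmod p$ non-vanishing into an honest $\neq 0$, which then triggers $\bar \cK' \leq_G \bar \cK$ via the fixed-point counting formula; this small bridge is the most delicate part of the argument and is where I would spend the most care.
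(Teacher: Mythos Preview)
Your proposal is correct and follows essentially the same route as the paper: both invoke Proposition~\ref{TheoBurn}(3) to factor the quotient map through some $\phi_{\bar\cK}$, identify $R$ with $\ZZ/p\ZZ$, and then deduce uniqueness from the congruence $\phi_{\bar\cK}\equiv\phi_{\bar\cK'}\pmod p$ together with the fixed-point formula. Your uniqueness step via the triviality of ring automorphisms of $\ZZ$ and $\ZZ/p\ZZ$ is a clean repackaging of what the paper does more directly; the paper additionally reproves uniqueness of the minimal element of $T(\phi)$ by a product computation, which is redundant given Proposition~\ref{TheoBurn}(3), so your version is in fact slightly tighter.
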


\begin{proof} Consider the natural map $\phi: B_n(G) \ra B_n(G)/I$. Then  $\phi(x)= \phi_{\bar \cS}(x)\cdot 1_R$ for some $n$-slice $\bar \cS$ by Proposition~\ref{TheoBurn}.
So
\begin{eqnarray*}
I&=&\{x \in B_n(G) ~\vert~ \phi(x)=0\}\\
&=&\{x \in B_n(G) ~\vert~ \phi_{\bar \cS}(x)\cdot 1_R=0\}\\
&=&\{x \in B_n(G) ~\vert~ \phi_{\bar \cS}(x) \equiv 0 (\mod ~p)\}= I_{ \bar \cS,p} 
\end{eqnarray*}
where $p$ is the characteristic of the ring $B_n(G)/I$.\\
Let $\cI_n:=\{\bar \cS \in \Pi_n(G)~\vert~ \langle \bar \cS \rangle_G \notin I\}$. If $\bar \cT$ and $\bar \cK$ are both minimal in $\cI$, then 
\begin{eqnarray*}
\langle \bar \cT \rangle_G.\langle \bar \cK \rangle_G&=& \sum_{g \in [T_0\backslash G/K_0]}\langle \bar \cT \cap {}^g\bar \cK   \rangle_G\\
&=& \phi_{\bar \cT}(\langle \bar \cK \rangle_G)\langle \bar \cT \rangle_G ~(\mod~I)
\end{eqnarray*}
(This relation must hold for any $\langle \bar \cK \rangle_G \notin I$)\\
Since $\langle \bar \cT \rangle_G.\langle \bar \cK \rangle_G \notin I$, we have $ \phi_{\bar \cT}(\langle \bar \cK \rangle_G) \neq 0$ and $ \phi_{\bar \cK}(\langle \bar \cT \rangle_G) \neq 0$ by symmetry, and so $\bar \cK=_G \bar \cT$. So the set $\cI_n$ has a unique minimal element, up to isomorphism.  On the other hand, the  quotient ring $B_n(G)/I$ is an integral domain, we have that either $p=0$ or  $p$ is a prime number. If $p=0$, then the projection $\phi: B_n(G) \ra B_n(G)/I$ is equal to $\phi_{\bar \cK}$ and   $I= I_{ \cK,p}$. If $p \neq 0$, then $\phi$ is equal to the reduction  of $\phi_{\bar \cK}$ modulo $p$.\\
If $\bar \cG$ is any $n$-slice of $G$ with $I=  I_{\cG,p}$ and $\phi_{\bar \cG} (\langle \bar \cG \rangle_G)  \neq 0~~(\mod ~p)$  then for a $\bar \cK$ minimal  in $\cI_n$

 $$\phi_{\bar \cK} (\langle \bar \cK \rangle_G) \equiv \phi_{ \bar \cG} (\langle \bar \cK \rangle_G) \neq 0~~(\mod~p).$$ In particular $\phi_{\bar \cK} (\langle \bar \cG \rangle_G) $ is non zero; and similarly $\phi_{\bar \cG} (\langle \bar \cK \rangle_G) $ is non zero. This can only occur if $\bar \cK =_G \bar \cG$.
\end{proof}

\begin{Nota}
Let $p$ be a prime number.
\begin{itemize}
\item Let $\big(\Pi_n(G) \big)_{(p)}$ denote the subset of $\Pi_n(G)$ consisting of the $n$-slices $\bar \cS$ such that $N_G(\bar \cS)/S_0$ is a $p'$-group. 
\item For any $\bar \cS \in \Pi_n(G)$, let $\bar \cS^{\small\to}_p$ be the unique element $\bar \cK$ of $\big(\Pi_n(G) \big)_{(p)}$, up to conjugation, such that $I_{\bar \cS,p}= I_{\bar \cK,p}$.
\item  If $\bar \cS$  is a $n$-slice of $G$, let $\bar \cS^{+}_p$ denote a slice of the form $P\bar \cS:= (PS_0, \dots, PS_n)$ of $G$, where $P$ is a Sylow $p$-subgroup of $N_G(\bar \cS)$.\\
Define inductively an increasing sequence $(\bar \cS^i )_i$ in $(\Pi_n(G), \preceq)$ by
 $\bar \cS^0= \bar \cS$ and $\bar \cS^{i+1}=(\bar \cS^i )^{+}_p $, for $i \in \NN$. We set $\bar \cS^{\infty}$ for  the largest term  of the sequence $(\bar \cS^i )_i$.

\item By Proposition~\ref{ideals}, the prime ideal of $B_n(G)$  are parametrized by pairs  $(\bar \cS, p)$ where $p$ is equal to $0$ or a prime number and  $\bar \cS \in \Pi_n(G)$ is such that  
  $\vert N_G(\bar \cS)/S_0 \vert \not \equiv 0 ~(\mod~p)$. We set  $\Theta_n (G)$  the set of  $(\bar \cS, p)$ where $p$ is equal to $0$ or a prime number and  $\bar \cS \in \Pi_n(G)$ is such that  
  $\vert N_G(\bar \cS)/S_0 \vert \not \equiv 0 ~(\mod~p)$.\\

\end{itemize}
\end{Nota}
Note that if we define a relation $  ``\overset{p}{\sim} "$ on $\Pi_n(G)$ by $\bar \cS \overset{p}{\sim} \bar \cS'$ if $\bar \cS^{\to}_p  =_G \bar \cS'^{\to}_p$ then $  ``\overset{p}{\sim} "$  is an equivalence relation. 

\begin{Prop}
The $n$-slice  $\bar \cS^{\to}_p$ is conjugate to $\bar \cS^{\infty}$.

 \end{Prop}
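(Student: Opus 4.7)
The plan is to verify that $\bar \cS^{\infty}$ satisfies the two conditions uniquely characterizing $\bar \cS^{\to}_p$: that $\bar \cS^{\infty}\in\big(\Pi_n(G)\big)_{(p)}$ and that $I_{\bar \cS,p}=I_{\bar \cS^{\infty},p}$. The uniqueness clause built into the definition of $\bar \cS^{\to}_p$ will then force $\bar \cS^{\infty}=_G\bar \cS^{\to}_p$. First I would check that $(\bar \cS^i)_{i\ge 0}$ is a well-defined weakly increasing sequence in the finite poset $(\Pi_n(G),\preceq)$: if $P$ is a Sylow $p$-subgroup of $N_G(\bar \cS^i)=\bigcap_j N_G(S^i_j)$, then $P$ normalizes each $S^i_j$, so $PS^i_j$ is a subgroup of $G$, and the chain $PS^i_0\subseteq\dots\subseteq PS^i_n$ follows from that of the $S^i_j$; hence $\bar \cS^i\preceq\bar \cS^{i+1}$. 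Since $\Pi_n(G)$ is finite, the sequence stabilizes; let $N$ be the smallest index with $\bar \cS^N=_G\bar \cS^{N+1}$, so $\bar \cS^{\infty}=_G\bar \cS^N$.

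Second, I would apply Corollary~\ref{conj}(\ref{congrue}): for every $(n,G)$-simplex $\cX^f_n$, $\phi_{\bar \cS^i}(\cX^f_n)\equiv\phi_{\bar \cS^{i+1}}(\cX^f_n)\pmod p$. Composing each ghost map with the projection $\ZZ\to\ZZ/p\ZZ$ then yields equal ring homomorphisms $B_n(G)\to\ZZ/p\ZZ$, hence $I_{\bar \cS^i,p}=I_{\bar \cS^{i+1},p}$. Iterating along the finite chain gives $I_{\bar \cS,p}=I_{\bar \cS^{\infty},p}$.

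Third, at the stable point $\bar \cS^N=\bar \cS^{N+1}$, any Sylow $p$-subgroup $P$ of $N_G(\bar \cS^N)$ must satisfy $PS^N_j=S^N_j$ for every $j$, which (taking the identity in $S^N_j$) forces $P\le S^N_0$. Therefore $N_G(\bar \cS^{\infty})/S^N_0$ has trivial Sylow $p$-subgroup and hence is a $p'$-group, so $\bar \cS^{\infty}\in\big(\Pi_n(G)\big)_{(p)}$. Combined with the previous paragraph, the uniqueness of $\bar \cS^{\to}_p$ in $\big(\Pi_n(G)\big)_{(p)}$ satisfying $I_{\bar \cS,p}=I_{\bar \cS^{\to}_p,p}$ yields $\bar \cS^{\infty}=_G\bar \cS^{\to}_p$. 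The only mildly subtle point is this final $p'$-quotient check at the stable term; everything else is just finiteness of the poset together with the congruence from Corollary~\ref{conj}(\ref{congrue}).
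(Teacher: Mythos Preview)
Your proof is correct and follows essentially the same route as the paper: both arguments iterate the congruence $\phi_{\bar\cS}\equiv\phi_{P\bar\cS}\pmod p$ from Corollary~\ref{conj}(\ref{congrue}) along the chain $\bar\cS\preceq\bar\cS^{+}_p\preceq\cdots\preceq\bar\cS^{\infty}$ to obtain $I_{\bar\cS,p}=I_{\bar\cS^{\infty},p}$, and then use that $N_G(\bar\cS^{\infty})/S_0^{\infty}$ is a $p'$-group to identify $\bar\cS^{\infty}$ with $\bar\cS^{\to}_p$. The paper phrases the last step via the minimality characterization of $\bar\cS^{\to}_p$ coming from Proposition~\ref{ideals}, whereas you invoke directly the uniqueness clause in the definition; your version has the minor advantage of actually justifying the assertion that $N_G(\bar\cS^{\infty})/S_0^{\infty}$ is a $p'$-group, which the paper states without argument.
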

 
\begin{proof} 
By definition, the $n$-slice $\bar \cS^{\to}_p$ is a minimal element $\bar \cK$ of the~$(\Pi_n(G), \preceq~)$ such that
$$\phi_{\bar \cS }(\langle \bar \cK  \rangle_G):= \vert \{ g \in G/K_0~\vert~\bar \cS^g \preceq  \bar \cK \} \vert \not\equiv 0~(\mod~p).$$
Thus one can assume that $\bar \cS \preceq \bar \cK$. Since $\phi_{\bar \cS} \equiv \phi_{P\bar \cS} ~(\mod~p)$  for any $p$-subgroup $P$ of $N_G(\bar \cS)$ by Corollary~\ref{congrue}, one can also assume that $\bar \cS^{+}_p  \preceq \bar \cK$, and inductively, that $\bar \cS^{\infty}  \preceq \bar \cK$. Moreover $\phi_{\bar \cS^{\infty}} \equiv \phi_{\bar \cK } ~(\mod~p)$. As $N_G(\bar \cS^{\infty}) / S_0^{\infty}$ is a $p'$-group, it follows that $\bar \cS^{\infty} =_G \bar \cK$.
  \end{proof}
 \begin{Prop}\label{max}
   Let $(\bar \cS, p)$, $(\bar \cS', p')$ be elements of  $\Theta_n (G)$. \\ Then $I_{\bar \cS', p'} \subseteq I_{\bar \cS, p}$ if and only if
  \begin{itemize}
\item either $p'=p$ and the $n$-slices $\bar \cS'$ and $\bar \cS$ are conjugate in~$G$.
\item or  $p'=0$ and $p>0$, and the $n$-slices $\bar \cS'^{\to}_p$ and $\bar \cS$ are conjugate in $G$.
\end{itemize}
  \end{Prop}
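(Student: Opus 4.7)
The plan is to reduce the containment $I_{\bar\cS',p'}\subseteq I_{\bar\cS,p}$ to an analysis of the induced surjection between integral domain quotients. Since $\phi_{\bar\cS}(\mathbf e_n)=1$, each map $\phi_{\bar\cS}^G\colon B_n(G)\to\ZZ$ is surjective, so the quotient $B_n(G)/I_{\bar\cS,p}$ is isomorphic to $\ZZ$ if $p=0$ and to $\ZZ/p\ZZ$ if $p$ is a prime. A containment $I_{\bar\cS',p'}\subseteq I_{\bar\cS,p}$ yields a surjective ring homomorphism of the form
\[
B_n(G)/I_{\bar\cS',p'}\twoheadrightarrow B_n(G)/I_{\bar\cS,p},
\]
so only four combinations of $(p,p')$ are possible, and the case $p'>0$, $p=0$ (and also $p'\neq p$ both nonzero) is immediately excluded because $\ZZ/p'\ZZ$ admits no surjection onto $\ZZ$ or onto $\ZZ/p\ZZ$ with $p\neq p'$.

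Next I would treat the three remaining cases. If $p=p'$ (either both zero or a common prime), the surjection $\ZZ\to\ZZ$ or $\ZZ/p\ZZ\to\ZZ/p\ZZ$ is forced to be an isomorphism, hence $I_{\bar\cS',p'}=I_{\bar\cS,p}$. The uniqueness clause of Proposition~\ref{ideals}, together with the hypothesis $\bar\cS,\bar\cS'\in(\Pi_n(G))_{(p)}$ built into the definition of $\Theta_n(G)$, then forces $\bar\cS=_G\bar\cS'$. In the mixed case $p'=0$, $p>0$, the surjection is $\ZZ\twoheadrightarrow\ZZ/p\ZZ$; since the only prime ideal of $\ZZ=B_n(G)/I_{\bar\cS',0}$ of residue characteristic $p$ is $p\ZZ$, the ideal $I_{\bar\cS,p}/I_{\bar\cS',0}$ must be $p\ZZ$, which gives
\[
I_{\bar\cS,p}=(\phi^G_{\bar\cS'})^{-1}(p\ZZ)=I_{\bar\cS',p}.
\]
By definition $I_{\bar\cS',p}=I_{\bar\cS'^{\to}_p,p}$ with $\bar\cS'^{\to}_p\in(\Pi_n(G))_{(p)}$, and Proposition~\ref{ideals} again gives $\bar\cS=_G\bar\cS'^{\to}_p$.

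The converse directions are direct. If $p=p'$ and $\bar\cS=_G\bar\cS'$, then the two ideals are literally equal. If $p'=0$, $p>0$ and $\bar\cS'^{\to}_p=_G\bar\cS$, then
\[
I_{\bar\cS',0}\subseteq I_{\bar\cS',p}=I_{\bar\cS'^{\to}_p,p}=I_{\bar\cS,p},
\]
the first inclusion holding because $\phi^G_{\bar\cS'}(x)=0$ implies $\phi^G_{\bar\cS'}(x)\equiv 0\pmod p$.

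The only delicate point I expect is the identification $B_n(G)/I_{\bar\cS',0}\cong\ZZ$: once this is established (using surjectivity of $\phi^G_{\bar\cS'}$), the classification of prime ideals of $\ZZ$ makes the bridge between $I_{\bar\cS',0}$ and $I_{\bar\cS',p}$ automatic, and everything else flows from Proposition~\ref{ideals} and the definition of $\bar\cS'^{\to}_p$.
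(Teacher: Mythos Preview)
Your proof is correct and follows essentially the same route as the paper: both reduce the containment to the induced surjection $B_n(G)/I_{\bar\cS',p'}\twoheadrightarrow B_n(G)/I_{\bar\cS,p}$, rule out the impossible characteristic combinations, use that a surjection $\ZZ\to\ZZ$ or $\ZZ/p\ZZ\to\ZZ/p\ZZ$ is an isomorphism to get equality when $p=p'$, and in the mixed case identify $I_{\bar\cS,p}$ with $I_{\bar\cS',p}=I_{\bar\cS'^{\to}_p,p}$ before invoking the uniqueness clause of Proposition~\ref{ideals}. Your write-up is in fact slightly more explicit (you spell out why $\phi^G_{\bar\cS'}$ is surjective and why the pullback of $p\ZZ$ is exactly $I_{\bar\cS',p}$), but the substance is the same.
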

 \begin{proof} 
 Assume that $I_{\bar \cS', p'} \subseteq I_{\bar \cS, p}$. Then there exists a surjective ring homomorphism $\phi$ from $B_n(G)/I_{\bar \cS', p'}\cong \ZZ/p'\ZZ$ to $B_n(G)/I_{\bar \cS, p}\cong \ZZ/p\ZZ$. Hence either $p'=p$ or $p'=0$ and $p>0$. If $p=p'$ then $\phi$ is a bijection, and so $I_{\bar \cS', p'}= I_{\bar \cS, p}$ which implies that $\bar \cS'$ and $\bar \cS$ are conjugate. If  $p'=0$ and $p>0$ then $\phi^G_{\bar \cS}$ is the reduction modulo $p$ of $\phi^G_{\bar \cS'}$, and so $I_{\bar \cS', p} \subseteq I_{\bar \cS, p}$. Hence  $\bar \cS'^{\to}_p$ and $\bar \cS$ are conjugate in $G$, by Corollary~\ref{ideals}. Conversely, if $\bar \cS'$ and $\bar \cS$ are conjugate then $\phi^G_{\bar \cS}=\phi^G_{\bar \cS'}$, in particular $I_{\bar \cS', 0} = I_{\bar \cS, 0}$. If $p$ is a prime then Proposition~\ref{max} implies that $\phi^G_{\bar \cS'}(x) \equiv \phi^G_{\bar \cS'^{\to}_p}(x)~{\mod ~p}$ for any $x \in B_n(G)$. So $I_{\bar \cS'^{\to}_p, 0} \subseteq I_{\bar \cS', p}$. And if $\bar \cS'^{\to}_p$ and $\bar \cS$ are conjugate in $G$, then  $I_{\bar \cS'^{\to}_p, p}=I_{\bar \cS', p} = I_{\bar \cS, p}$. So $I_{\bar \cS'^{\to}_p, 0} \subseteq I_{\bar \cS, p}$.
 \end{proof}  
 \begin{Rema} 
  \end{Rema} \begin{itemize}
\item Since for a prime $p$, the prime ideals of $\big( B_n(G) \big)_{(p)}$ are of the form $\ZZ_{(p)}I$ where $I$ is a prime ideal of $B_n(G)$ which does not meet $\ZZ-p\ZZ$, we have that $I=I_{\bar \cS, p}$ or $I=I_{\bar \cS, 0}$. Hence the connected  component  of $\big(B_n(G) \big)_{(p)} $ are indexed by $[\big(\Pi_n(G) \big)_{(p)} ]$. Moreover, the component  indexed  by the $n$-slice $\bar \cS$ consists  of a unique maximal element $\ZZ_{(p)}I_{\bar \cS, p}$ and of ideals $\ZZ_{(p)}I_{\bar \cS', 0}$, where $\bar \cS' \in \Pi_n(G)$ is such that $\bar \cS'^{\to}_p =_G\bar \cS$.
\item For $\bar \cS$ in $\big(\Pi_n(G) \big)_{(p)}$  let $\tilde e_{\bar \cS}$ denote the sum $\sum e^G_{\bar \cS'}$ in $\QQ B_n(G)$ where $\bar \cS'$ ranges over all $n$-slices in $[\Pi_n(G)]$  such that $\bar \cS$ is conjugate to $\bar \cS'^{\to}_p$. Then the distinct
$\tilde e_{\bar \cS}$ are   the primitive idempotents of $\big( B_n(G) \big)_{(p)}$. This is because, for any commutative ring $R$ the connected component of $Spec R$ corresponding to a primitive idempotent $e$ in $R$ consists of all prime ideals of $R$ which contain $1-e$. If $e$ is a primitive idempotent of $\big( B_n(G) \big)_{(p)}$ corresponding to the connected component of $\big( B_n(G) \big)_{(p)}$ indexed by $\bar \cS, p$, then $1-e \in \ZZ_{(p)}I_{\bar \cS', 0}$ if and only if $\bar \cS'^{\to}_p=_G \bar \cS$.
\end{itemize}

 \begin{Prop} 
Two ideals    $ I_{\bar \cS, p}$   and $I_{\bar \cS', p'} $  are in the same connected component of $Spec(B_n(G))$ if and only $D^{\infty}(\bar \cS)$ is conjugate to $ D^{\infty}(\bar \cS')$, where \\
$D^{\infty}(\bar \cK):= (D^{\infty}(\bar \cK_0), \dots, D^{\infty}(\bar \cK_n))$ and $D^{\infty}(\bar \cK_i)$ denotes the last term in the derived series of $\bar \cK_i$.
In particular, $Spec(B_n(G))$ is connected  if and only if $G$ is solvable.

 \end{Prop}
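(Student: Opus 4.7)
The plan is to identify the connected components of $Spec(B_n(G))$ explicitly, and then characterize them in terms of $D^\infty$. Since $Spec(B_n(G))$ is a finite topological space (by Proposition~\ref{ideals}), two primes lie in the same connected component if and only if they can be joined by a chain of primes whose consecutive pairs are related by containment. By Proposition~\ref{max}, the only non-trivial containment is of the form $I_{\bar \cS', 0} \subseteq I_{\bar \cS, p}$ with $\bar \cS'^{\to}_p =_G \bar \cS$; moreover $I_{\bar \cS, 0} \subseteq I_{\bar \cS, p}$ for any prime $p$. Consequently, the connected components of $Spec(B_n(G))$ correspond bijectively to the equivalence classes of $[\Pi_n(G)]$ under the equivalence relation $\sim$ generated by $\bar \cS \sim \bar \cS'$ whenever $\bar \cS^{\to}_p =_G \bar \cS'^{\to}_p$ for some prime $p$. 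By the preceding proposition, $\bar \cS^{\to}_p$ is reached from $\bar \cS$ by iterating the Sylow extension step $\bar \cS \mapsto \bar \cS^+_p = (PS_0, \dots, PS_n)$, with $P$ a Sylow $p$-subgroup of $N_G(\bar \cS)$; hence $\sim$ is equivalently the smallest equivalence relation containing all pairs $(\bar \cS, \bar \cS^+_p)$.

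The heart of the proof is the purely group-theoretic statement that $\bar \cS \sim \bar \cS'$ if and only if $D^\infty(\bar \cS) =_G D^\infty(\bar \cS')$. The ``only if'' direction is immediate componentwise: since $PS_i/S_i$ is a $p$-group, hence solvable, the derived series of $PS_i$ eventually enters $S_i$, so $D^\infty(PS_i) \leq D^\infty(S_i)$, and monotonicity of $D^\infty$ gives the reverse inclusion; thus $D^\infty(\bar \cS) = D^\infty(\bar \cS^+_p)$ and $D^\infty$ is invariant under every generating step of $\sim$. For the ``if'' direction, reduce by $G$-conjugation to the case $D^\infty(\bar \cS) = D^\infty(\bar \cS') = \bar L$, and show separately that $\bar \cS \sim \bar L$ and $\bar \cS' \sim \bar L$. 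Each quotient $S_i/L_i$ is solvable, so one builds a chain of intermediate slices from $\bar L$ to $\bar \cS$ where each step enlarges one coordinate by a cyclic factor of prime order lying in the simultaneous normalizer $N_G(\bar \cS)=\cap_{i=0}^n N_G(S_i)$. This is the main obstacle: generalizing Dress's classical argument for $n=0$ and Bouc's argument for $n=1$ in~\cite{S.Bouc1} to an entire chain of subgroups requires performing the successive prime-order extensions from the highest coordinate downward, and carefully choosing the representatives so that the inclusions $S_{i-1} \leq S_i$ are preserved throughout the construction.

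Finally, for the ``in particular'' statement, $Spec(B_n(G))$ is connected if and only if $D^\infty(\bar \cS) =_G D^\infty(\bar \cS')$ for every pair of slices. Applying this to the trivial slice $(1,\dots,1)$, whose $D^\infty$ is $(1,\dots,1)$, and to the constant slice $(G,\dots,G)$, whose $D^\infty$ equals $(D^\infty(G),\dots,D^\infty(G))$, forces $D^\infty(G) = 1$, i.e. $G$ is solvable. Conversely, if $G$ is solvable then every subgroup of $G$ is solvable, so $D^\infty(H) = 1$ for every $H \leq G$, making $D^\infty$ identically equal to $(1,\dots,1)$ on all slices, and $Spec(B_n(G))$ connected.
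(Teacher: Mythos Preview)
Your proposal follows essentially the same route as the paper: translate connectedness into the equivalence relation generated by the relations $\bar\cS \overset{p}{\sim} \bar\cS'$, check that $D^\infty$ is invariant under each generating step, and then link $\bar\cS$ to $D^\infty(\bar\cS)$ by a chain of intermediate slices obtained from a subnormal series with prime-power quotients, treating one coordinate at a time. You are in fact more careful than the paper on the ``only if'' direction (the paper merely asserts the invariance of $D^\infty$), while on the ``if'' direction the paper is more explicit: it reduces the bottom coordinate $S_0$ first and actually carries that step out, the point being that the auxiliary $p$-group $P$ sits inside $S_0\le S_1\le\cdots\le S_n$, so $P\bar\cK_i=\bar\cK_{i-1}$ exactly; it then says the remaining coordinates are handled ``by the same procedure''. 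You instead propose going from the top coordinate downward and, like the paper, leave the later steps as a sketch.

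One correction is needed in your description of the chain: the prime-order element used at each step must lie in the normalizer of the \emph{current intermediate} slice (this is what Corollary~\ref{conj}\,(\ref{congrue}) requires to get $\phi_{\bar\cK}\equiv\phi_{P\bar\cK}\pmod p$), not in $N_G(\bar\cS)=\bigcap_i N_G(S_i)$ as you wrote. This matters, because once one coordinate has already been reduced to $D^\infty(S_k)$, a Sylow $p$-subgroup chosen inside the next coordinate need not normalize---let alone be contained in---$D^\infty(S_k)$; it is precisely the containment $P\le S_0\le S_j$ that makes the paper's first step work, and this is what has to be arranged (or circumvented) for the subsequent coordinates.
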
  
 \begin{proof} 
Recall that if $R$ is a Noetherian ring. For any prime ideal $I \in Spec(R)$, let $\overline{I}= \{ P~\vert~ P \in Spec(R), ~P \supset I\}$ be the closure of $I$ in $Spec(R)$. Then, two ideals $P$ and $P'$ are in the same connected component of $Spec(R)$ if and only if there exists a series of minimal ideals $I_1, \dots, I_n$ with $P \in \overline{I_1}$,  $P' \in \overline{I_n}$ and $\overline{I_i }\cap \overline{I_{i+1}} \neq \emptyset$ for $i=1, \dots,n-1$. Now, if $R= B_n(G)$, then $\bar I_{\bar \cS, 0} \cap \bar I_{\bar \cS', 0} \neq \emptyset$ if and only if the $n$-slice $\bar \cS^{\to}_p$ is conjugate to $\bar \cS'^{\to}_p$, for some prime $p$. Hence, if  $I_{\bar \cS, p} $ and $I_{\bar \cS', p'} $ are in the same connected component of $Spec(B_n(G))$ then $ D^{\infty}(\bar \cS)$ is conjugate to $ D^{\infty}(\bar \cS')$.
The ideals $I_{\bar \cS, p}$ and $I_{D^{\infty}(\bar \cS), 0}$ are in the same connected component. Indeed,  one can find a series of normal subgroups of $S_0$  such that $D^{\infty}(S_0) = S_0^{(n)} \triangleleft S_0^{(n-1)}\triangleleft \dots \triangleleft S_0^{(1)} \triangleleft  S_0^{(0)}= S_0$ such that $S_0^{(i-1)}/ S_0^{(i)}$ is a $p_i$-group for some prime $p_i$ ( $i=1,\dots, n$). Hence letting,
$\bar \cK_i = (S_0^{(n)}, S_1, \dots, S_n)$ 
 for $i=0,\dots,n$ one obtains
$$\bar \cS=\bar \cK_0 \overset{p_0}{\sim} \bar \cK_1\overset{p_1}{\sim} \dots \overset{p_{n-2}}{\sim}      \bar \cK_{n-1}    \overset{p_{n-1}}{\sim}   \bar \cK_n= (D^{\infty}(S_0), S_1, \dots, S_n),$$ and 

$I_{\bar \cS, p} \in  \bar I_{\bar \cK_0, 0}$,  $I_{D^{\infty}(S_0), S_1, \dots, S_n, 0} \in  \bar I_{\bar \cK_n, 0}$ and  $\bar I_{\bar \cK_{i-1}, 0} \cap \bar I_{\bar \cK_{i}, 0} \neq \emptyset$. So, $I_{\bar \cS, p}$  and $I_{D^{\infty}(S_0), S_1, \dots, S_n, 0}$ are in the same connected component. By the same proceed, one prove that\\ $I_{(D^{\infty}(S_0), D^{\infty}(S_1), \dots, S_n), 0}$ and $I_{\bar \cS, p}$ are in the same component, and so one.
 \end{proof}

\section{Green biset functor }

Let $R$ be a commutative ring with identity.
The {\em biset category} over $R$ will be denoted by $R \cC$: its objects are all finite groups, and that for finite groups $G$ and H, the hom-set $\Hom_{R\cC}(G,H)$ is $RB(H,G)= R \otimes B(H,G)$, where $B(H,G)$ is the Grothendieck group of the category of finite $(H,G)$-bisets. The composition of morphisms in $R \cC$ is induced by $R$-bilinearity from the composition of bisets (see \cite{S.Bouc}  Definition 3.1.1). \par
A good choice of a family  $\cG$ of finite groups and for every $G,H \in \cG$, a set $\Gamma(G,H)$ of subgroups of $G \times H$ can  lead to an important category. For example  if we  fix a non-empty class $\cD$ of finite groups closed under subquotients and cartesian products and  we denote by $R \cD$ the full subcategory of $R \cC$  consisting of groups in $\cD$, then $R \cD$ is a replete subcategory of $R \cC$ in the sense of Bouc \cite{S.Bouc}.\par
The category of {\em biset functors}, i.e. the category of $R$-linear functors from $R\cC$  to the category $R-\Mod$ of all $R$-modules, will be denoted by $\cF_{R}$. The category $\cF_{\cD,R}$ of $\cD$-biset is the category of $R$-linear functors from $R\cD$ to $R-\Mod$.\par

Let $G$ be a finite group,  $H$ a subgroup of of $G$ and $N$ be a normal subgroup of $G$. One sets 
\begin{itemize}
\item 
$\Res_H^G := [{}_HG_G]$ for the image in the {\em biset Burnside group} $B(H,G)$ of the isomorphism class of $G$ where $G$ is viewed as an $(H,G)$-biset via the left and right multiplication.
\item $\Ind_H^G := [{}_GG_H]$ for the image in  $B(H,G)$ of the isomorphism class of $G$ where $G$ is viewed as a $(G,H)$-biset via the left and right multiplication.
\item $\Inf^G_{G/N} := [{}_G(G/N)_{G/N}]$ for the image in  $B(G,G/N)$ of the isomorphism class of $G/N$ where $G/N$ is viewed as a $(G,G/N)$-biset via the the canonical epimorphism $G \ra G/N$ and left and right multiplication.
\item $\Dof^G_{G/N} := [{}_{G/N}(G/N)_{G}]$ for the image in  $B(G/N,G)$ of the isomorphism class of $G/N$ where $G/N$ is viewed as a $(G,G/N)$-biset via the the canonical epimorphism $G \ra G/N$ and left and right multiplication.
\item If $f:G\ra H$ is a group isomorphism, then we set $\Iso(f):= [{}_{H}H_{G}]$ for the image in  $B(H,G)$ of the isomorphism class of $H$ where $H$ is considered  as an $(H,G)$-biset via $hxg=hxf(g)$ for $h,x \in H$ and $g \in G$.
\end{itemize} 
It is possible to verify that all elements in $B(H,G)$ are sums of $[H\times G/L]$ where $L$ runs through subgroups of $H\times G$ and  that these satisfy the following decomposition:
\begin{equation}
[H\times G/L]= \Ind^H_D \circ \Inf^D_{D/C} \circ \Iso (f) \circ  \Dof^B_{B/A}\circ \Res^G_B 
\end{equation}
where $(D,C)$ and $(B,A)$ are $1$-slices of $H$ and $G$ respectively with the additional properties that $C \lhd D$ and $A \lhd B$, and $f$ is some the group isomorphism from $B/A$ to $D/C$ (see \cite{S.Bouc} Lemma 2.3.26 for more details).

A Green $\cD$-biset functor is defined as a monoid in $\cF_{\cD,R}$. This is equivalent to the following definitions:
\begin{Defi} [\cite{S.Bouc} Definition 8.5.1] \label{Green}
A $\cD$-biset functor $A$ is a Green $\cD$-biset  functor if it is equipped with a linear products $A(G) \times A(H) \ra A(G \times H)$ denoted by $(a,b) \mapsto a \times b$, for groups, $G$, $H$ in $\cD$, and an element $\epsilon_A \in A(1)$, satisfying the following conditions:
\begin{enumerate}
\item (Associativity). Let $G,H$ and $K$ be groups in $\cD$. If we consider the canonical isomorphism from $G \times (H \times K)$ to $(G \times H) \times K$, then for any $a \in A(G)$, $b \in A(H)$ and $c \in A(K)$

$$(a\times b) \times c = A\bigg(\Iso^{(G \times H) \times K}_{G \times (H \times K)}\bigg) \big(a \times (b \times c) \big).$$
\item (Identity element).  Let $G$ be a group in $\cD$ and consider the canonical isomorphisms $1 \times G \ra G$ and $G \times 1 \ra G$. Then for any $a \in A(G)$
$$a = A\big(\Iso^G_{1 \times G}\big)( \epsilon_A  \times a)=  A\big(\Iso^G_{ G\times 1}\big)( a \times \epsilon_A )$$
\item (Functoriality). If $\phi: G \ra G'$ and $\psi: H \ra H'$ are morphisms in $R \cD$, then for any $a \in A(G)$ and $b \in A(H)$
$$A \big(\phi \times \psi\big)(a \times b)= A(\phi)(a) \times A(\psi)(b).$$

\end{enumerate}
 
\end{Defi} 
 There is an equivalent way of defining a Green biset functor given by Romero in (\cite{N.Romero3} Lema 4.2.3):

\begin{Defi}[\cite{N.Romero3} Definiciòn 3.2.7]
A  $\cD$-Green biset functor is an object $A \in \cF_{\cD,R}$ together with the datum of an $R$-algebra sructure on each $A(H)$, $H \in \cD$, such that the following axioms are satisfied for all groups in $K$ and $G$ in $\cD$ and all group homomorphisms $K \ra G$: 
\begin{enumerate}
\item For the $(K,G)$-biset $G$, which we denote by $G_r$,  the morphism $A(G_r)$ is a ring homomorphism.
\item For the $(G,K)$-biset $G$, denoted by $G_l$, the morphism $A(G_l)$ satisfies the Frobenius identities for all $b \in A(G)$ and $a \in A(K)$,
\begin{eqnarray*}
A(G_l)(a) \cdot b&=& A(G_l)\big(a \cdot A(G_r)(b) \big)\\
b\cdot A(G_l)(a)&=& A(G_l)\big(A(G_r)(b) \cdot a \big)
\end{eqnarray*}
where $\cdot$ denotes the ring product on $A(G)$, resp. $A(K)$.

\end{enumerate}
\end{Defi}
\begin{Defi}
If $A$ and $C$ are Green $\cD$-biset functors, a morphism of Green $\cD$-biset functors from $A$ to $C$ is a natural transformations $f: A \ra C$ such that $f_{H \times K}(a \times b)= f_H(a) \times f_K(b)$ for any groups $H$ and $K$ in $\cD$ and any $a \in A(H)$, $b \in A(K)$, and such that $f_1(\epsilon_A)=\epsilon_C$.
\end{Defi}

\begin{Prop}

The correspondence $$G \mapsto B_n(G)$$ defines a structure of Green biset functor.
\end{Prop}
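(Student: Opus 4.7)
The plan is to first endow $B_n$ with the structure of a biset functor, and then add the multiplicative data that upgrades it to a Green biset functor. For a finite $(H,G)$-biset $U$, I would define the action on an $(n,G)$-simplex $\cX_n^f = (X_0 \xrightarrow{f_1} X_1 \to \dots \to X_n)$ componentwise by
$$U \cdot \cX_n^f := \Big( U \times_G X_0 \xrightarrow{U \times_G f_1} U \times_G X_1 \to \dots \to U \times_G X_n \Big),$$
which is an $(n,H)$-simplex since $U \times_G -$ is a functor from ${}_GSet$ to ${}_HSet$. Extending $\ZZ$-bilinearly in $U$ then yields a candidate group homomorphism $B(H,G) \otimes B_n(G) \ra B_n(H)$. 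The key verification here is that $U \cdot -$ preserves the defining relations: since $U \times_G -$ is a left adjoint it commutes with disjoint unions, so a decomposition $X_i = A_i \sqcup B_i$ yields a decomposition $U \times_G X_i = (U \times_G A_i) \sqcup (U \times_G B_i)$, and one checks that the induced decomposition of $U \cdot \cX_n^f$ matches $[U \cdot \cX_n^f, U \cdot \cA_n^f, U \cdot \cB_n^f]_i$.

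Next I would check functoriality of this action with respect to composition of bisets. Given an $(H,G)$-biset $U$ and a $(K,H)$-biset $V$, the natural isomorphism $V \times_H (U \times_G X) \cong (V \times_H U) \times_G X$ of $K$-sets, applied coordinatewise, provides an isomorphism of $(n,K)$-simplices $V \cdot (U \cdot \cX_n^f) \cong (V \times_H U) \cdot \cX_n^f$. Compatibility with identities is immediate. This establishes that $B_n : R\cC \to R\text{-}\Mod$ is a biset functor.

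For the Green structure I would define the external product $B_n(G) \times B_n(H) \ra B_n(G \times H)$ by
$$\cX_n^f \times \cY_n^g := \Big( X_0 \times Y_0 \xrightarrow{f_1 \times g_1} X_1 \times Y_1 \to \dots \to X_n \times Y_n \Big),$$
viewed as a $(G \times H)$-simplex, and the unit $\epsilon_{B_n} \in B_n(1)$ as the class of the constant $(n,1)$-simplex $\bullet \to \dots \to \bullet$. Bilinearity, hence well-definedness modulo the sliceability relations, follows from distributivity of the cartesian product of sets over disjoint unions, applied coordinatewise. The associativity and identity axioms of Definition~\ref{Green} then reduce to the corresponding canonical isomorphisms of cartesian products in ${}_GSet$, promoted to $(n,-)$-simplices coordinate by coordinate.

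The only non-routine axiom is functoriality: for $(G',G)$- and $(H',H)$-bisets $\phi$ and $\psi$ one must show
$$B_n(\phi \times \psi)(a \times b) = B_n(\phi)(a) \times B_n(\psi)(b)$$
for $a \in B_n(G)$, $b \in B_n(H)$. This reduces, by bilinearity, to verifying the canonical isomorphism of $(G' \times H')$-sets
$$(\phi \times \psi) \times_{G \times H} (X_i \times Y_i) \cong (\phi \times_G X_i) \times (\psi \times_H Y_i)$$
and checking that the induced morphisms between successive levels of the $n$-simplex correspond under this isomorphism to $f_{i} \times g_{i}$. I expect this last compatibility between the tensor product of bisets and the transition maps $f_i, g_i$ of the simplices to be the main technical point: it is where one must carefully track that the Fubini-type isomorphism for $\times_{G \times H}$ is natural with respect to morphisms of $G$-sets and $H$-sets applied levelwise, so that it assembles into an isomorphism of $(n, G' \times H')$-simplices and not merely a family of isomorphisms at each level. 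Once this naturality is in place, passing to the quotients $B_n(-)$ via the universal property of Remark~\ref{Burn} finishes the proof.
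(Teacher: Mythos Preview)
Your proposal is correct and follows essentially the same route as the paper: define the biset action componentwise via $U\times_G-$, check it respects the sliceability relations and composes as required, then define the external product and unit levelwise and verify the Green axioms. If anything you are more explicit than the paper about the Fubini-type isomorphism underlying the functoriality axiom, which the paper dismisses with ``one verifies easily''.
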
 

\begin{proof} There are several steps:
\begin{itemize}
\item  Any $(n,G)$ simplex $\cX^f_n$ give rise a $(n, H)$-simplex $U \times_G \cX^f_n$ in the following rule (following the notion of Definition 2.3.11 in \cite{S.Bouc}):
$$U \times_G \cX^f_n: (\xymatrix{U\times_GX_0 \ar[r]^{Uf_1} &  U\times_GX_{1}\ar[r]^-{Uf_{2}} & \dots U\times_GX_{n-1} \ar[r]^-{Uf_{n}} &  U\times_GX_{n}}) $$
defined by  $Uf_i: U\times_GX_{i-1} \ra U\times_GX_i, (u,x) \mapsto (u, f_i(x)) $ for any $i= 1,\dots, n$.\\
Let $\mu= (\mu_i)$ be a morphism from $\cX^f_n$ to $\cY^g_n$. Then $\mu$ defines a morphism of $(n,H)$-simplices  from $U \times_G \cX^f_n$ to $U \times_G \cY^g_n$ given by  $$U\times_G\mu= ( U\mu_i: (u,x)\mapsto (u, \mu_i(x)))_i.$$ Indeed, for any $i$, $x_i \in X_i$ and $u \in U$, we have 
\begin{eqnarray*}
(U\mu_i)\circ (Uf_i)(u,x_{i-1}) &=& (u, \mu_if_i(x_{i-1})) \\
&=&(u, g_{i}\mu_{i-1}(x_{i-1}))\\
&=& (Ug_i)\circ (U\mu_{i-1})(u,x_{i-1})
\end{eqnarray*}
Therefore, the correspondence $I_U: \cX^f_n \mapsto U \times_G \cX^f_n $ is a functor from the  $(n,G)$-simplices to $(n,H)$-simplices. On the other hand,  it is straightforward to show that the defining relations of $B_n(G)$ are mapped to the defining relations of  $B_n(H)$. Hence, the later functor induces a homomorphism of groups $B_n(U): B_n(G) \ra B_n(H)$.
\item The  correspondence $G \mapsto B_n(G)$ defines a structure of  biset functor:\\
Clearly, if $U \cong U'$ (as (H,G)-bisets) then the functors $I_U$ and $I_{U'}$ are isomorphic. So, $B_n(U)=B_n(U')$. If $U$ has the form $U=U_1 \sqcup U_2$ (as (H,G)-bisets) then $I_U= I_{U_1} \sqcup I_{U_2}$, and so $B_n(U)= B_n(U_1) + B_n(U_2)$. We may state that for any $(K,H)$-biset $V$, we have a isomorphism of $(n,K)$-simplices  between $ V \times_H(U \times \cX^f_n)$ and $ (V \times_HU) \times \cX^f_n$ which induces an isomorphism $I_V \circ I_U \cong I_{V \times_HU}$ and so $B_n(V) \circ B_n(U)= B_n(V \times_HU)$. Finally, since $I_{\Id_G} \cong 1$(the functor identity, we have $B_n(Id_G)=1_{B_n(G)}$. This shows that we have a functor from the biset category to the category $\ZZ-\Mod$.
\item Let  $\cY^g_n$ be a $(n,H)$-simplex. Then the product $\cX^f_n \times \cY^g_n$ is a $(n, G\times H)$-simplex in the obvious way. It induces a well-defined bilinear $$\times: B_n(G) \times B_n(H) \ra B_n(G\times H), ~~((a,b) \mapsto a \times b),$$
and $(n,1)$-simplex $\textbf e$ is the identity of the product, up to identification $G\times 1=G$. One verifies easily that the axioms of Definition~\ref{Green} are satisfied. 

\end{itemize}
\end{proof}
\begin{Prop}
The functors $d_j $, $j=1,\dots, n$   (resp.  $s_i $, $i=0,\dots, n-1$)
induce morphisms of Green biset functors  $$d_j : B_n \ra B_{n-1} ~~~~\big(\text{resp.}~   s_i: B_{n-1} \ra B_{n} \big)$$ such that the identities (\ref{1}), (\ref{2}), (\ref{3}) hold.
\end{Prop}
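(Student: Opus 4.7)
The plan is to assemble three pieces of data. Proposition~\ref{opera} already provides the group (in fact ring) homomorphisms $d_j: B_n(G) \to B_{n-1}(G)$ and $s_i: B_{n-1}(G) \to B_n(G)$ for each finite group $G$, and establishes the simplicial identities (\ref{1}), (\ref{2}), (\ref{3}). Hence the only thing left to verify is that the family $\{d_j^G\}_G$ (resp.\ $\{s_i^G\}_G$) defines a morphism in the category of Green biset functors: naturality with respect to arbitrary bisets, compatibility with the external product $\times$, and preservation of the identity element $\epsilon$.

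First I would check naturality with respect to bisets. Fix an $(H,G)$-biset $U$. Recall from the previous proposition that $B_n(U)$ comes from the functor $I_U: \cX_n^f \mapsto U \times_G \cX_n^f$, which simply applies $U \times_G -$ coordinatewise to the sequence of $G$-sets and to the structural morphisms $f_i$. For $0 < j < n$, the face map $d_j$ replaces $\xymatrix{X_{j-1} \ar[r]^{f_j} & X_j \ar[r]^{f_{j+1}} & X_{j+1}}$ by $\xymatrix{X_{j-1} \ar[r]^{f_{j+1} \circ f_j} & X_{j+1}}$; since the functor $U \times_G -$ preserves composition of $G$-maps, we have $U \times_G (f_{j+1} \circ f_j) = (U \times_G f_{j+1}) \circ (U \times_G f_j)$, so $I_U \circ d_j = d_j \circ I_U$ on $(n,G)$-simplices. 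The cases $j=0$ and $j=n$ amount to dropping $X_0$ or $X_n$, and $I_U$ clearly commutes with dropping an endpoint. A completely analogous argument, this time using that $U \times_G -$ sends identity maps to identity maps, shows $I_U \circ s_i = s_i \circ I_U$. Passing to Grothendieck groups, we obtain
$$B_{n-1}(U) \circ d_j^G = d_j^H \circ B_n(U), \qquad B_n(U) \circ s_i^G = s_i^H \circ B_{n-1}(U),$$
which is naturality.

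Next I would check compatibility with the external product. For a $(n,G)$-simplex $\cX_n^f$ and a $(n,H)$-simplex $\cY_n^g$, the product $\cX_n^f \times \cY_n^g$ is obtained coordinatewise, with structural maps $f_i \times g_i$. As in the previous step, contracting $(f_{j+1} \circ f_j) \times (g_{j+1} \circ g_j) = (f_{j+1} \times g_{j+1}) \circ (f_j \times g_j)$ shows that
$$d_j^{G \times H}(\cX_n^f \times \cY_n^g) = d_j^G(\cX_n^f) \times d_j^H(\cY_n^g),$$
and inserting an identity coordinatewise gives the analogous statement for $s_i$. Evaluated on generators and extended by bilinearity, this gives the required compatibility with $\times$ at the level of Green functors. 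Finally, as was already noted in the proof of the earlier proposition, $d_j(\mathbf{e}_n) = \mathbf{e}_{n-1}$ and $s_i(\mathbf{e}_{n-1}) = \mathbf{e}_n$ (both are built from the one-point $G$-set), so the identity element is preserved; in particular the identity $\epsilon_{B_n} = \mathbf{e}_n \in B_n(1)$ is sent to $\epsilon_{B_{n-1}}$.

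The only potential obstacle is the bookkeeping of the $i$-sliceable decompositions $[\cX_n^f, \cA_n^f, \cB_n^f]_k$ under $U \times_G -$, since the index $k$ may shift by one when the splitting sits at coordinate $j$ (for $d_j$) or $j, j{+}1$ (for $s_j$). But since $U \times_G (A \sqcup B) = (U \times_G A) \sqcup (U \times_G B)$, the functor $I_U$ sends $\Gamma$-triples to $\Gamma$-triples, so no new defining relations are created; the well-definedness on $B_n(G)$ already established in Propositions~\ref{opera} and the one introducing the Green structure therefore transfers without difficulty. The identities (\ref{1}), (\ref{2}), (\ref{3}) hold on each $B_n(G)$ by Proposition~\ref{opera}, and naturality transports them to identities of natural transformations.
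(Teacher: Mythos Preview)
Your proof is correct and is precisely the ``simple verification'' that the paper leaves to the reader; the paper's own proof of this proposition is literally the single sentence ``This is a simple verification.'' You have filled in the details the author omitted---naturality of $d_j$ and $s_i$ under the biset action $U\times_G-$, compatibility with the external product via $(f_{j+1}\circ f_j)\times(g_{j+1}\circ g_j)=(f_{j+1}\times g_{j+1})\circ(f_j\times g_j)$ (an identity already used earlier in the paper for the internal product), and preservation of $\mathbf e_n$---so nothing further is needed.
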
 
\begin{proof}
This is a simple verification. 
\end{proof}

\section*{Acknowledgement} The author would like to thank to acknowledge support from CCM-UNAM-Morelia.

 \bibliographystyle{plain}
\bibliography{mabiblio3}

${}$\\
${}$\\
{\small
Ibrahima Tounkara\\
Centro de Ciencias Matem\'aticas\\
UNAM,\\
C.P. 58089\\
Morelia Mich\\
Mexico\\
e-mail: tounkara@matmor.unam.mx}

\end{document}